\documentclass[12pt]{amsart}
\textwidth=433pt
\evensidemargin=-.0in
\oddsidemargin=-.0in
\topmargin=-.0in
\textheight=630pt

\usepackage{amsmath}
\usepackage{amssymb}
\usepackage{latexsym}

\newcommand{\halmos}{\rule{5pt}{5pt}}

\numberwithin{equation}{section}

\newtheorem{prop}{\bf Proposition}[section]
\newtheorem{thm}[prop]{\bf Theorem}

\theoremstyle{definition}

\setcounter{section}{0}

\begin{document}
\title[Variants of confluent $q$-hypergeometric equations]
{Variants of confluent $q$-hypergeometric equations}
\author{Ryuya Matsunawa}
\address{Address of R.M.,~T.S., Department of Mathematics, Faculty of Science and Engineering, Chuo University, 1-13-27 Kasuga, Bunkyo-ku, Tokyo 112-8551, Japan}
\author{Tomoki Sato}
\author{Kouichi Takemura}
\address{Address of K.T., Department of Mathematics, Ochanomizu University, 2-1-1 Otsuka, Bunkyo-ku, Tokyo 112-8610, Japan}
\email{takemura.kouichi@ocha.ac.jp}
\subjclass[2010]{33D15,39A13}
\keywords{$q$-hypergeometric equation, confluence of singularities, degeneration, series solution, hypergeometric function}
\begin{abstract}
Variants of the $q$-hypergeometric equation were introduced in our previous paper with Hatano.
In this paper, we consider degenerations of the variant of the $q$-hypergeometric equation, which is a $q$-analogue of confluence of singularities in the setting of the differential equation.
We also consider degenerations of solutions to the $q$-difference equations.
\end{abstract}
\maketitle

\section{Introduction}

The special functions have rich mathematical structures, and some of them have been applied to physics.
Gauss' hypergeometric function
\begin{align}
& \: _2 F _1 (\alpha ,\beta ;\gamma ;z)=1+ \frac{\alpha \beta  }{ \gamma } z + \frac{\alpha (\alpha +1) \beta (\beta +1 ) }{2! \: \gamma (\gamma +1)} z^2 +  \cdots  + \frac{(\alpha )_n (\beta )_n }{n!(\gamma )_n } z^n  + \cdots 
\end{align}
is a typical example of the special functions.
Here $(\lambda )_n= \lambda (\lambda +1) \dots (\lambda +n-1) $.
It is essentially characterized by Gauss' hypergeometric equation
\begin{equation}
 z(1-z) \frac{d^2y}{dz^2} + \left( \gamma - (\alpha + \beta +1)z \right) \frac{dy}{dz} -\alpha \beta  y=0.
\label{eq:GaussHGE}
\end{equation}
It is a second order Fuchsian differential equation with three singularities $\{ 0,1,\infty \}$.
Here the Fuchsian differential equation is a linear differential equation whose singularity on the Riemann sphere is always the regular singularity.
However, several special functions are related with the differential equation which may not be Fuchsian.
For example, Kummer's confluent hypergeometric function
\begin{equation}
\: _1 F _1 (\alpha ;\gamma ;z)=1+  \frac{\alpha  }{ \gamma } z + \frac{\alpha (\alpha +1) }{2! \: \gamma (\gamma +1)} z^2  + \dots + \frac{ (\alpha  )_n }{n!(\gamma )_n } z^n + \dots .
\end{equation}
satisfies the differential equation
\begin{equation}
 z \frac{d^2y}{dz^2} + \left( \gamma - z \right) \frac{dy}{dz} -\alpha  y=0,
\label{eq:KummerCHGE}
\end{equation}
which has an irregular singularity at $z=\infty $.
Eq.~(\ref{eq:KummerCHGE}) is called Kummer's differential equation or the confluent hypergeometric differential equation.

It is widely known that Kummer's function and Kummer's differential equation are obtained by confluence of the singularity of Gauss' hypergeometric equation.
Namely we replace the variable $z$ in Gauss' hypergeometric equation with $z/ \beta $ and take the limit $\beta \to \infty $.
Then the singularity $z=1 $ of Gauss' hypergeometric equation merges into the singularity $z=\infty $ and we obtain the confluent equation.

We can also consider the confluence process from Kummer's differential equation.
We set $z= u_1 x + u_2$, $u_2 = u_1 ^2 /2$, $\gamma = u_2$ and $\alpha =-\lambda /2$, and take the limit $u_1 \to \infty $.
Then we obtain the Hermite-Weber equation;
\begin{equation}
  \frac{d^2y}{dx ^2} -2x \frac{dy}{dx} + \lambda  y=0 .
\end{equation}
By setting $y= e^{x^2/2} u $, it is transformed to the differential equation $ -u'' +x^2 u= (1 + \lambda ) u $, which is related with the quantum harmonic oscillator.

The $q$-analogue of the hypergeometric functions has been studied well from the 19th century.
Heine's basic hypergeometric series was introduced as
\begin{equation}
_2 \phi _1 (a ,b ;c ;x) = \sum_{n=0}^{\infty} \frac{(a ;q)_n (b ;q)_n }{(q;q) _n (c ;q)_n } x^n, \quad (\lambda ; q)_n= \prod_{i=0}^{n-1}(1- \lambda q^i).
\label{eq:qhypser}
\end{equation}
It satisfies the equation
\begin{equation}
(x-q) f(x/q) - ((a+b)x -q-c)f(x)+ (abx-c)f(q x)=0. 
\label{eq:qhyp}
\end{equation}
It is known that Gauss' hypergeometric differential equation (\ref{eq:GaussHGE}) is obtained from Eq.~(\ref{eq:qhyp}) by the limit $q \to 1$, and the differential equation has singularities at $x=0,1,\infty $.

In this paper, we investigate a $q$-analogue of confluence processes which is related with the hypergeometric equations.
In particular, we consider the confluence processes from the variant of $q$-hypergeometric equation of degree two, which was introduced in \cite{HMST} as  
\begin{align}
& (x-q^{h_1 +1/2} t_1) (x - q^{h_2 +1/2} t_2) g(x/q) \label{eq:qhypervar1} \\
& + q^{\alpha _1 +\alpha _2} (x - q^{l_1-1/2}t_1 ) (x - q^{l_2 -1/2} t_2) g(q x) \nonumber \\
& -[ (q^{\alpha _1} +q^{\alpha _2} ) x^2 +E x + p ( q^{1/2}+ q^{-1/2}) t_1 t_2 ] g(x) =0, \nonumber \\
& p= q^{(h_1 +h_2 + l_1 + l_2 +\alpha _1 +\alpha _2 )/2 } , \quad E= -p \{ (q^{- h_2 }+q^{-l_2 })t_1 + (q^{- h_1 }+ q^{- l_1 }) t_2 \} . \nonumber
\end{align}
By taking the limit $q\to 1$, we essentially obtain the second order Fuchsian differential equation with three singularities $\{ t_1, t_2 ,\infty \}$ (see \cite{HMST}).
Recall that Hahn \cite{Hahn} introduced a $q$-difference analogue of Heun's differential equation of the form
\begin{align}
& \{ a_2 x^2 +a_1 x+ a_0 \} g(x/q)  -\{ b_2 x^2 + b_1 x + b_0 \} g(x) \label{eq:qHeun}\\
& \qquad + \{ c_2 x^2 + c_1 x+ c_0 \} g(xq) =0 , \nonumber
\end{align}
with the condition $a_2 a_0 c_2 c_0 \neq 0$, and it was rediscovered in \cite{TakR} by considering degenerations of the Ruijsenaars-van Diejen system.
We call Eq.~(\ref{eq:qHeun}) the $q$-Heun equation.
Then Eq.~(\ref{eq:qhypervar1}) is a specialization of the $q$-Heun equation, and it is characterized by the condition that the difference of the exponents at the origin $x=0$ is one and the singularity $x=0$ is apparent (see \cite{TakqH,HMST}).

We consider the degeneration of the variant of $q$-hypergeometric equation of degree two such that the polynomial $q^{\alpha _1 +\alpha _2} (x - q^{l_1-1/2}t_1 ) (x - q^{l_2 -1/2} t_2)$ tends to a linear polynomial.
We take the limit $q^{\alpha_2} \to 0  $ formally in Eq.~(\ref{eq:qhypervar1}) with the condition that $\alpha_2 +l_2 = h_1 +h_2 -l_1-\alpha _1 +1 -2\lambda $ is fixed.
Then we have
\begin{align}
\label{C-Heun}
&q^{ h_1+h_2-l_1-2\lambda +1/2}t_2(q^{l_1- 1/2}t_1-x)g(qx) \\
& +(x-q^{h_1 + 1/2} t_1)(x-q^{h_2 + 1/2}t_2)g(x/q) \nonumber \\
&-[q^{\alpha _1}x^2 -q^{h_1+h_2-\lambda +1/2}(q^{-h_2}t_1+q^{-h_1}t_2+q^{-l_1}t_2)x \nonumber \\
& \quad +q^{h_1+h_2-\lambda }(q + 1) t_1t_2]g(x)=0 . \nonumber 
\end{align} 
We may regard the parameter $\lambda $ in Eq.~(\ref{C-Heun}) to be independent from the other parameters $h_1$, $h_2$, $l_1$, $\alpha _1 $, $t_1$ and $t_2$.
We call Eq.~(\ref{C-Heun}) a variant of the singly confluent $q$-hypergeometric equation or the variant of the confluent $q$-hypergeometric equation of type $(1,2)$.
By taking the limit $q\to 1$, we essentially obtain Kummer's confluent hypergeometric equation although the position of the regular singularity is deformed to $x=t_1$ (see section \ref{sec:limdifftial} for details).

We consider further degeneration.
We take the limit $q^{-l_1} \to 0 $ formally in Eq.~(\ref{C-Heun}).
Then we have the following equation;
\begin{align}
\label{WC-Heun}
& g(qx)+   q^{ 2\lambda +1} ( 1 - q^{-h_1 -1/2} t_1 ^{-1} x )(1 - q^{-h_2 -1/2 } t_2 ^{-1} x )g(x/q)\\
&-[q^{\alpha _1 + 2\lambda -h_1- h_2 } t_1 ^{-1} t_2 ^{-1} x^2 - q^{ \lambda +1/2}(q^{-h_1}t_1 ^{-1} + q^{-h_2}t_2 ^{-1})x +q^{ \lambda } (q + 1) ]g(x)=0 . \nonumber 
\end{align}
We call Eq.~(\ref{WC-Heun}) a variant of the biconfluent $q$-hypergeometric equation or the variant of the confluent $q$-hypergeometric equation of type $(0,2)$.
By taking the limit $q\to 1$, we essentially obtain the Hermite-Weber differential equation (see section \ref{sec:limdifftial}).

Next we investigate solutions of the $q$-difference equations.
It was discovered in \cite{HMST} that the variant of $q$-hypergeometric equation of degree two (Eq.~(\ref{eq:qhypervar1})) has several explicit formal solutions, and we describe them in Proposition \ref{thm:thmHMST}.
For example, the function 
\begin{align}
& g (x) = x^{\lambda } \sum _{n=0}^{\infty} c_n \Big( \frac{x}{q^{l_1-1/2} t_1} ;q \Big)_n , \; c_n= \frac{(q^{\lambda +\alpha _1 };q )_n (q^{\lambda +\alpha _2 };q )_n q^n }{(q^{h_1 - l_1 +1};q )_n (q^{h_{2} - l_1 +1} t_{2} /t_1 ;q )_n (q;q)_n } \label{eq:solex}
\end{align}
is a formal solution of Eq.~(\ref{eq:qhypervar1}), where $\lambda = (h_1 +h_2 -l_1-l_2 -\alpha _1-\alpha _2+1 )/2$.
Here the formal solution means that the coefficients of the solution (e.g.~$c_n$ in Eq.~(\ref{eq:solex})) are determined recursively.
On the degeneration to the variant of the singly confluent $q$-hypergeometric equation (Eq.~(\ref{C-Heun})) as $q^{\alpha_2} \to 0  $, we can also take the limit of the solutions of Eq.~(\ref{eq:qhypervar1}), and we obtain several explicit formal solutions of Eq.~(\ref{C-Heun}) (see theorems in section \ref{sec:SClimit}).
For example, the function
\begin{align}
& g (x) = x^{\lambda } \sum _{n=0}^{\infty} c_n \Big( \frac{x}{q^{l_1-1/2} t_1} ;q \Big)_n , \; c_n= \frac{(q^{\lambda +\alpha _1 };q )_n  q^n }{(q^{h_1 - l_1 +1};q )_n (q^{h_{2} - l_1 +1} t_{2} /t_1 ;q )_n (q;q)_n }  \label{eq:solconfex} 
\end{align}
is a solution of Eq.~(\ref{C-Heun}), in which the term $(q^{\lambda +\alpha _2 };q )_n $ in Eq.(\ref{eq:solex}) is replaced with $1$ by the limit $q^{\alpha_2} \to 0$.
We can also several explicit formal solutions of Eq.~(\ref{WC-Heun}) by the limit $q^{-l_1} \to 0 $ in Eq.~(\ref{C-Heun}) (see theorems in section \ref{sec:BClimit}).
On this research, we eventually found new solutions of Eq.~(\ref{eq:qhypervar1}), which we note in Theorem \ref{thm:newsol}.
We can also consider the limits of the functions in Theorem \ref{thm:newsol} and we obtain several explicit formal solutions of Eq.~(\ref{C-Heun}) and those of Eq.~(\ref{WC-Heun}).

Recall that the coefficient of $g(qx)$ in the variant of the singly confluent $q$-hypergeometric equation (i.e.~Eq.~(\ref{C-Heun})) is a linear polynomial.
We can also consider the degeneration that the coefficient of $g(x/q)$ tends to a linear polynomial.
We perform it in section \ref{sec:otherconfl} and we obtain another variant of the singly confluent $q$-hypergeometric equation (see Eq.~(\ref{C2-Heun})) together with several formal solutions of that.
We also obtain another variant of the biconfluent $q$-hypergeometric equation (see Eq.~(\ref{WC2-Heun})) and several formal solutions.
It is seen in section \ref{sec:gauge} that another variant of the singly confluent $q$-hypergeometric equation (resp.~another variant of the biconfluent $q$-hypergeometric equation) is transformed to Eq.~(\ref{C-Heun}) (resp.~Eq.~(\ref{WC-Heun})) by multiplying a scalar function (a gauge factor).
Then we obtain a solution of Eq.~(\ref{C-Heun}) (resp.~Eq.~(\ref{WC-Heun})) by multiplying a solution of another variant of the singly confluent $q$-hypergeometric equation (resp.~another variant of the biconfluent $q$-hypergeometric equation) by the gauge factor.
 
This paper is organized as follows.
In section \ref{sec:sol}, we review some solutions of the variant of $q$-hypergeometric equation of degree two (Eq.~(\ref{eq:qhypervar1})) and we obtain new solutions.
In section \ref{sec:SClimit}, we obtain some solutions of the variant of the singly confluent $q$-hypergeometric equation (Eq.~(\ref{C-Heun})).
In section \ref{sec:BClimit}, we obtain some solutions of the variant of the singly confluent $q$-hypergeometric equation (Eq.~(\ref{WC-Heun})).
In section \ref{sec:otherconfl}, we discuss other variants of the singly confluent $q$-hypergeometric equation and  the biconfluent $q$-hypergeometric equation.
In section \ref{sec:gauge}, we discuss relationships between another variant of the singly confluent $q$-hypergeometric equation (resp.~another variant of the biconfluent $q$-hypergeometric equation) and  Eq.~(\ref{C-Heun}) (resp.~Eq.~(\ref{WC-Heun})), and we give applications to the solutions. 
In section \ref{sec:limdifftial}, we consider the limits to the singly confluent differential equation of Kummer and the biconfluent differential equation of Hermite-Weber as $q \to 1$.
In section \ref{sec:concl}, we give concluding remarks.

\section{Solutions to the variant of $q$-hypergeometric equation of degree two} \label{sec:sol}

Recall that the variant of $q$-hypergeometric equation of degree two was given in Eq.~(\ref{eq:qhypervar1}), i.e.
\begin{align*}
& (x-q^{h_1 +1/2} t_1) (x - q^{h_2 +1/2} t_2) g(x/q)  + q^{\alpha _1 +\alpha _2} (x - q^{l_1-1/2}t_1 ) (x - q^{l_2 -1/2} t_2) g(q x) \\
&  \qquad -[ (q^{\alpha _1} +q^{\alpha _2} ) x^2 +E x + p ( q^{1/2}+ q^{-1/2}) t_1 t_2 ] g(x) =0, \nonumber \\
& p= q^{(h_1 +h_2 + l_1 + l_2 +\alpha _1 +\alpha _2 )/2 } , \quad E= -p \{ (q^{- h_2 }+q^{-l_2 })t_1 + (q^{- h_1 }+ q^{- l_1 }) t_2 \} . \nonumber
\end{align*}
In this paper we do not consider convergence of solutions of the $q$-difference equations, and we treat solutions formally.
If the infinite summation in the formal solution terminates as a finite summation, then it is the exact solution.
Several solutions of the variant of $q$-hypergeometric equation of degree two were obtained in \cite{HMST} as follows;
\begin{prop} $($\cite{HMST}$)$ \label{thm:thmHMST}
Let $\lambda = (h_1 +h_2 -l_1-l_2 -\alpha _1-\alpha _2+1 )/2$.\\
(i) The function 
\begin{align}
g (x) = & x^{-\alpha _1 } \sum _{n=0}^{\infty} (q^{1/2} x^{-1})^n \frac{ ( q^{\lambda +\alpha _1 } ; q )_n }{(q^{\alpha _1  -\alpha _2 +1 } ; q )_n } \label{eq:g1x} \\
& \cdot \sum _{k=0}^n  \frac{(q^{ \lambda +\alpha _1  -h_2 +l_2 }; q)_k (q^{  \lambda +\alpha _1  -h_1 +l_1 } ;q)_{n-k }}{(q;q )_k (q;q)_{n-k}}  (q^{l_1} t_1)^k (q ^{l_2}t_2 ) ^{n-k} \nonumber 
\end{align}
is a solution of the variant of $q$-hypergeometric equation of degree two (i.e.~Eq.~(\ref{eq:qhypervar1})).
(ii) Set $(i,i')=(1,2) $ or $(2,1)$.
Then the function
\begin{align}
 g (x) & = x^{\lambda } \sum _{n=0}^{\infty} \Big( \frac{x}{q^{l_i-1/2} t_i} ;q \Big)_n \frac{(q^{\lambda +\alpha _1 };q )_n (q^{\lambda +\alpha _2 };q )_n }{(q^{h_i - l_i +1};q )_n (q^{h_{i'} - l_i +1} t_{i'} /t_i ;q )_n (q;q)_n } q^n \\
& \biggl( = x^{\lambda }\  _3\phi_2\biggl(q^{\lambda + \alpha _1},q^{\lambda +\alpha _2 },\frac{x}{q^{l_i-1/2}t_i};q^{h_i-l_i+1},q^{h_{i'}-l_i +1}t_{i'}/t_i ;q , q \biggr) \biggr) \nonumber
\end{align}
is a solution of Eq.~(\ref{eq:qhypervar1}).\\
(iii) Set $(i,i')=(1,2) $ or $(2,1)$.
Then the function
\begin{align}
& g (x)= x^{-\alpha _1 } \Big[ \sum _{n=0}^{\infty} \Big( \frac{q^{h_i+1/2} t_i }{x}  ;q \Big)_n \frac{(q^{\lambda +\alpha _1 };q )_n }{(q^{h_i - l_{i'} +1} t_i/t_{i'} ;q )_n  } q^n \\
& \qquad \qquad \cdot \sum _{k=0}^n   \frac{(q^{\lambda -h_{i'} +l_{i'} +\alpha _1} ;q)_k }{(q^{h_i-l_i+1};q)_{k} (q;q)_k (q;q)_{n-k} } q^{k(k+1)/2} (-q^{h_i-l_{i'} }t_i/t_ {i'})^k  \Big] 
\nonumber
\end{align}
is a solution of Eq.~(\ref{eq:qhypervar1}).
\end{prop}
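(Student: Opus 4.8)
The plan is to prove the three assertions by direct substitution into Eq.~(\ref{eq:qhypervar1}): for each proposed solution one expands $g(x/q)$, $g(x)$ and $g(qx)$ in a single family of basis functions tailored to the series at hand, collects like terms, and checks that the resulting recurrence for the coefficients is solved by the stated closed form. I will describe the scheme for part (ii), which is the most transparent, and then indicate the modifications needed for (i) and (iii).

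\textbf{Part (ii).} Fix $i$, write $i'$ for the other index, and put $y=q^{1/2-l_i}x/t_i$, so that the proposed solution reads $g(x)=x^{\lambda}\sum_{n\ge0}c_n(y;q)_n$ with $c_n$ the displayed coefficient and $c_0=1$; note $x=q^{l_i-1/2}t_iy$, so each of the linear factors $x-q^{h_j+1/2}t_j$ and $x-q^{l_{i'}-1/2}t_{i'}$ is affine in $y$, while $x-q^{l_i-1/2}t_i=-q^{l_i-1/2}t_i(1-y)$. From $g(qx)=q^{\lambda}x^{\lambda}\sum_n c_n(qy;q)_n$ and $g(x/q)=q^{-\lambda}x^{\lambda}\sum_n c_n(y/q;q)_n$, I would use the elementary identities $(qy;q)_n=(y;q)_{n+1}/(1-y)$, $(y/q;q)_n=(1-y/q)(y;q)_{n-1}$ and $y(y;q)_n=q^{-n}\big((y;q)_n-(y;q)_{n+1}\big)$ to rewrite everything in the basis $\{(y;q)_m\}$. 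The structural point that makes the computation work is that the quadratic coefficient of $g(qx)$ carries the factor $x-q^{l_i-1/2}t_i=-q^{l_i-1/2}t_i(1-y)$, which exactly cancels the denominator $1-y$ coming from $(qy;q)_n$; after this, every term of the left-hand side is a finite combination of the $x^{\lambda}(y;q)_m$. Equating the coefficient of each $x^{\lambda}(y;q)_m$ to zero yields a recurrence of bounded width for the $c_n$ whose coefficients are explicit in $E$, $p$ and powers of $q$; dividing by $c_m$ and inserting the ratios $c_{m+j}/c_m$ read off from the displayed formula (all of them rational in $q^m$ and the parameters) reduces the claim to a single rational identity in $q^m$, which one clears of denominators and verifies. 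The identification with the terminating ${}_3\phi_2$ is then immediate from the definition (\ref{eq:qhypser}).

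\textbf{Parts (i) and (iii).} These are the Frobenius-type solutions attached to the exponent $-\alpha_1$ at $x=\infty$, and the series are double sums, so I would proceed in two stages. First expand $g(x/q),g(x),g(qx)$ by the same elementary identities and collect the coefficient of each element of the outer family — powers of $q^{1/2}x^{-1}$ in case (i), the shifted factorials $(q^{h_i+1/2}t_i/x;q)_n$ in case (iii); this turns the verification into, for each outer index $n$, a single closed identity for the inner sum. I expect that inner identity to be a known $q$-series evaluation: in (i) the inner sum has the shape of a Cauchy product in the two quantities $q^{l_1}t_1$ and $q^{l_2}t_2$ and should collapse by the $q$-binomial theorem / $q$-Vandermonde summation, while in (iii) the Gaussian factor $q^{k(k+1)/2}$ together with $(-1)^k$ signals a terminating sum evaluable by the $q$-Chu--Vandermonde (or $q$-Gauss) formula. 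The plan is thus to push the outer collection through and then invoke the appropriate summation.

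\textbf{Main obstacle.} Conceptually nothing is hard here; the difficulty is entirely one of bookkeeping. The quadratic coefficient polynomials shift the summation index, so the naive substitution produces a recurrence longer than two terms, and the argument rests on exploiting the factorisations of those polynomials — equivalently, on anchoring the expansion basis at the zero of the correct linear factor — so that the extra terms reorganise into multiples of the basic two-term residual. Keeping the half-integer powers of $q$ hidden in $E$ and $p$ consistent throughout, so that everything matches on the nose, is the step most likely to go wrong; I would organise the check by tracking, for fixed $m$, the coefficients of $x^{\lambda+m-1}$, $x^{\lambda+m}$ and $x^{\lambda+m+1}$ separately. Since the proposition is quoted from \cite{HMST}, one may otherwise simply cite that paper for the detailed computation.
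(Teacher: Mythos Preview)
Your final sentence is exactly right: the paper does not prove this proposition at all but simply quotes it from \cite{HMST}. The only place the paper touches on the method is inside the proof of Theorem~\ref{thm:scsol}, where it recalls from \cite{HMST} that for part~(ii) one substitutes the ansatz $g(x)=x^{\lambda}\sum_n a_n(x/(q^{l_i-1/2}t_i);q)_n$ into Eq.~(\ref{eq:qhypervar1}), obtains a four-index linear recurrence in $a_{n+1},a_n,a_{n-1},a_{n-2}$, and verifies that the displayed closed form satisfies it; this is precisely the scheme you outline for~(ii), so on that part your plan and the cited argument coincide. For (i) and (iii) the paper gives no indication of method beyond the citation, so your proposed two-stage reduction to $q$-Vandermonde is neither confirmed nor contradicted here; the original \cite{HMST} argument in fact proceeds the same way as for (ii), by deriving a recurrence for the outer coefficients and checking the closed form, rather than by invoking a summation identity for the inner sum.
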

Note that the functions which are obtained by replacing $\alpha _1 $ with $ \alpha _2$ are also solutions of the variant of $q$-hypergeometric equation of degree two.

We discovered new solutions of Eq.~(\ref{eq:qhypervar1}) as follows, which can be confirmed similarly to Proposition \ref{thm:thmHMST} established in \cite{HMST}.
\begin{thm} \label{thm:newsol}
Let $\lambda = (h_1 +h_2 -l_1-l_2 -\alpha _1-\alpha _2+1 )/2$. Set $(i,i')=(1,2) $ or $(2,1)$.\\
(i) The function
\begin{align}
 g (x) & = x^{\lambda } \sum _{n=0}^{\infty} \frac{(q^{\lambda +\alpha _1 };q )_n (q^{\lambda +\alpha _2 };q )_n }{(q^{h_i - l_i +1};q )_n (q^{h_{i} - l_{i'} +1} t_{i} /t_{i'} ;q )_n (q;q)_n } \Big( \frac{ q^{h_i+1/2} t_i}{x} ;q \Big)_n \Big( \frac{x}{q^{h_{i'}-1/2} t_{i'}} \Big)^n \\
& \biggl( = x^{\lambda }\  _3\phi_2\biggl(q^{\lambda + \alpha_1},q^{\lambda +\alpha _2 },\frac{q^{h_i+1/2} t_i}{x} ; q^{h_i - l_i +1} ,q^{h_{i} - l_{i'} +1} t_{i} /t_{i'}; q ,\frac{x}{q^{h_{i'}-1/2} t_{i'}} \biggr) \biggr) \nonumber  
\end{align}
is a solution of Eq.~(\ref{eq:qhypervar1}).\\
(ii) The function
\begin{align}
& g (x)= x^{\alpha _1 } \sum _{n=0}^{\infty} c_n  \frac{(q^{\lambda + \alpha _1 } ;q)_n }{(q^{1+h_{i'} -l_{i}} t_{i'} /t_{i} ;q )_n  } \Big( \frac{x}{ q^{l_{i} -1/2} t_{i}} ;q \Big)_n \Big( \frac{q^{-\lambda - \alpha _1 +h_{i'} +1/2 } t_{i'} }{x} \Big)^n ,  \\
& c_n = \sum _{k =0}^n q^{-nk + k^2/2} \frac{( q^{ \lambda + \alpha _1 - h_{i'} +l_{i'} } ;q)_{k}  }{(q^{h_{i} -l_{i} +1};q)_{k } (q;q)_{n-k} (q;q)_{k} } \Big(- \frac{q^{ -\lambda -\alpha _1+ h_{i} +1 /2 } t_{i} }{q^{l_{i'}} t_{i'}} \Big)^{k} \nonumber
\end{align}
is a solution of Eq.~(\ref{eq:qhypervar1}).
\end{thm}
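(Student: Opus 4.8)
The plan is to verify both families by direct substitution into Eq.~(\ref{eq:qhypervar1}), following the same scheme by which Proposition~\ref{thm:thmHMST} was established in \cite{HMST}; what changes is only the amount of bookkeeping, not the idea. To halve the work, note first that Eq.~(\ref{eq:qhypervar1}) is invariant under the simultaneous interchange $(h_1,l_1,t_1)\leftrightarrow(h_2,l_2,t_2)$ --- the two quadratic coefficients, as well as $p$ and $E$, are symmetric in the two index sets, and $\lambda$ is symmetric as well --- and that this interchange carries the $(i,i')=(1,2)$ instance of each displayed function to its $(i,i')=(2,1)$ instance. Hence it is enough to treat $(i,i')=(1,2)$.

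Next I would strip off the prefactor: writing $g(x)=x^{\lambda}f(x)$ in part~(i) and $g(x)=x^{\alpha_1}f(x)$ in part~(ii), and using $(x/q)^{\mu}=q^{-\mu}x^{\mu}$, $(qx)^{\mu}=q^{\mu}x^{\mu}$, Eq.~(\ref{eq:qhypervar1}) becomes an equation $\widetilde A(x)f(x/q)+\widetilde B(x)f(qx)+\widetilde C(x)f(x)=0$ of the same shape, the three quadratic coefficients merely rescaled by $q^{-\mu}$, $q^{\mu}$ and $1$. Then I would substitute the series $f(x)=\sum_{n\ge 0}d_n B_n(x)$ in which $B_n$ is the basis already built into the statement: in part~(i), $B_n(x)=(q^{h_1+1/2}t_1/x;q)_n\,x^n=\prod_{j=0}^{n-1}(x-q^{h_1+1/2+j}t_1)$, a monic degree-$n$ polynomial whose first factor coincides with a factor of $\widetilde A(x)$; in part~(ii), the Laurent-polynomial basis assembled from $(x/(q^{l_1-1/2}t_1);q)_n$ and $x^{-n}$, whose leading factor coincides with a factor of $\widetilde B(x)$. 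Using only the elementary relations $(a;q)_{n+1}=(1-aq^n)(a;q)_n=(1-a)(aq;q)_n$, one rewrites each of $\widetilde A(x)B_n(x/q)$, $\widetilde B(x)B_n(qx)$, $\widetilde C(x)B_n(x)$ as an explicit finite combination of neighbouring basis elements; collecting the coefficient of a fixed $B_n$ then gives a linear recurrence of bounded length for the $d_n$, with coefficients free of $x$.

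For part~(i) the recurrence so obtained is solved by the displayed ratio of $q$-shifted factorials: after clearing the three $q$-Pochhammer denominators it reduces to a rational identity in $q^n$, which is routine, and its $n=0$ instance is precisely the indicial condition at $x=0$ that fixes the exponent $\lambda$. (As an independent check one may also try to re-derive the same function by applying Heine-type transformations, or by summing the inner $k$-series in the solution of Proposition~\ref{thm:thmHMST}(iii); I have not pursued this.) The companion solution obtained by exchanging $\alpha_1$ and $\alpha_2$ follows from the identical computation --- here trivially, since $\alpha_1,\alpha_2$ already enter part~(i) symmetrically.

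Part~(ii) is where I expect the real obstacle: there the $n$-th coefficient is itself a terminating sum $c_n=\sum_k(\cdots)$, so the recurrence in $n$ from the previous step couples several of the $c_n$, each carrying its own inner index, and (because the candidate is a series in $x^{-1}$ times $x^{\alpha_1}$) the vanishing of the excess positive powers of $x$ has to be built into the sum rather than being automatic. The natural way to handle this is to interchange the two summations in the candidate $g(x)$: for fixed $k$ the remaining sum over $n\ge k$ is a single basic hypergeometric series of ${}_1\phi_1$ (equivalently $q$-binomial) type whose argument is a power of $q$, hence summable in closed form; after that collapse the candidate takes the single-sum shape of part~(i) and the recurrence is checked as before. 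Alternatively one first derives a contiguous relation in $n$ for $c_n$ itself --- its $n$-dependence entering only through the factor $q^{-nk}$, so $c_n$ is a terminating ${}_1\phi_1$-type sum in $k$ with an $n$-shifted argument --- and substitutes that into the $n$-recurrence; either route reduces the claim to elementary $q$-Pochhammer manipulations. Finally, the $\alpha_1\leftrightarrow\alpha_2$ companion (now a genuinely different solution, since $\alpha_1$ is singled out by the prefactor $x^{\alpha_1}$) follows by the same argument, and whenever one of the displayed series terminates it is an exact, not merely formal, solution, as recalled before the theorem.
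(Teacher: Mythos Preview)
Your plan is correct and matches the paper's own justification, which is simply that the theorem ``can be confirmed similarly to Proposition~\ref{thm:thmHMST} established in \cite{HMST}'' --- i.e.\ exactly the direct-substitution-and-recurrence scheme you outline, with the $(i,i')$ symmetry reduction and prefactor stripping you describe. The paper adds one observation you do not use: the solutions in Theorem~\ref{thm:newsol} are precisely those of Proposition~\ref{thm:thmHMST} with the role of $q$ replaced by $q^{-1}$, which in principle lets you transport the recurrence verification from \cite{HMST} rather than redo it, and in particular spares you the double-sum bookkeeping in part~(ii) that you correctly flag as the heavier step.
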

The functions which are obtained by replacing $\alpha _1 $ with $ \alpha _2$ are also solutions of the variant of $q$-hypergeometric equation of degree two.
Note that solutions in Theorem \ref{thm:newsol} correspond to the ones obtained by replacing the role of $q$ with $q^{-1}$ in Proposition \ref{thm:thmHMST}.

If $\lambda +\alpha _1$ is a negative integer and set $N= -\lambda -\alpha _1$, then the summation of each solution is finite, and the solution is a product of $x^{\lambda } $ and the polynomial of the variable $x$ of degree $N -1$.
The polynomial essentially coincides with the Big $q$-Jacobi polynomial (cf.~\cite{GR,KLS}).

\section{Singly confluent limit} \label{sec:SClimit}

In the introduction, we introduced a variant of the singly confluent $q$-hypergeometric equation or the variant of the confluent $q$-hypergeometric equation of type $(1,2)$ as Eq.~(\ref{C-Heun}), i.e.
\begin{align*}
&q^{ h_1+h_2-l_1-2\lambda +1/2}t_2(q^{l_1- 1/2}t_1-x)g(qx)+(x-q^{h_1 + 1/2} t_1)(x-q^{h_2 + 1/2}t_2)g(x/q)\\
&-[q^{\alpha _1}x^2 -q^{h_1+h_2-\lambda +1/2}(q^{-h_2}t_1+q^{-h_1}t_2+q^{-l_1}t_2)x \nonumber \\
& \qquad \qquad \qquad \qquad \qquad \qquad +q^{h_1+h_2-\lambda }(q + 1)t_1t_2]g(x)=0 . \nonumber 
\end{align*} 
By taking the limit $q\to 1$, we essentially obtain the second order linear differential equation with one regular singularity $x= t_1$ and one irregular singularity $x= \infty $ (see Eq.~(\ref{eq:Cdiffeqgt})).

Recall that Eq.~(\ref{C-Heun}) was obtained by the limit $q^{\alpha_2} \to 0  $ from the variant of $q$-hypergeometric equation of degree two (i.e.~Eq.~(\ref{eq:qhypervar1})). 
We can obtain solutions of a variant of singly confluent $q$-hypergeometric equation by considering the limit of the solutions of the variant of $q$-hypergeometric equation.
By taking the limit $q^{\alpha_2} \to 0  $ in Proposition \ref{thm:thmHMST}, we may obtain solutions of Eq.~(\ref{C-Heun}).
\begin{thm} \label{thm:scsol}
(i) The function 
\begin{align}
g(x) & = x^{-\alpha _1 } \sum _{n =0}^{\infty} ( q^{ - \lambda - \alpha _1 +h_1 + 1/2} t_1 x^{-1} )^n ( q^{\lambda + \alpha _1 } ; q )_{n } \label{eq:sols000} \\
& \qquad \qquad \qquad \cdot \sum _{\ell =0}^{n } \frac{(q^{  \lambda +\alpha _1  -h_1 +l_1 } ;q)_{\ell }}{(q;q )_{n- \ell } (q;q)_{\ell }} q^{-\ell ( 2 n - \ell -1)/2}  (- q^{ -\lambda -\alpha _1 - l_1 + h_2  }t_2 /t_1 ) ^{\ell } \nonumber 
\end{align}
is a solution of Eq.~(\ref{C-Heun}).\\
(ii) The function 
\begin{align}
g(x) & = x^{\lambda } \sum^{\infty}_{n=0} 
q^{n}\frac{(q^{\lambda +\alpha_1};q)_n }{(q^{h_1-l_1+1};q)_n(q^{h_2-l_1+1}t_2/t_1;q)_n(q;q)_n}
\left(\frac{x}{q^{l_1-1/2}t_1 };q \right)_n  \label{eq:solssimpconf01} \\
& \biggl( = x^{\lambda }\  _3\phi_2\biggl(q^{\lambda + \alpha_1},0,\frac{x}{q^{l_1-1/2}t_1};q^{h_1-l_1+1},q^{h_2-l_1+1}t_2/t_1;q , q \biggr) \biggr) \nonumber
\end{align}
is a solution of Eq.~(\ref{C-Heun}).\\
(iii) The function 
\begin{align}
g(x) & = x^{-\alpha _1 } \Big[ \sum _{n=0}^{\infty} \Big( \frac{q^{h_1+1/2} t_1 }{x}  ;q \Big)_n  (q^{\lambda +\alpha _1 };q )_n q^n 
\sum _{k=0}^n   \frac{q^{k^2 } ( q^{\lambda +\alpha _1 + h_1 -h_2}t_1/t_2)^k }{(q^{h_1-l_1+1};q)_{k} (q;q)_k (q;q)_{n-k} }  \Big] \label{eq:sols0001}
\end{align}
is a solution of Eq.~(\ref{C-Heun}).\\
(iv) The function 
\begin{align}
g(x) & = x^{-\alpha _1 } \Big[ \sum _{n=0}^{\infty} \Big( \frac{q^{h_2+1/2} t_2 }{x}  ;q \Big)_n \frac{q^n (q^{\lambda +\alpha _1 };q )_n }{(q^{h_2 - l_1 +1} t_2/t_1 ;q )_n  }  \label{eq:sols0002} \\
& \qquad \qquad \qquad \cdot 
\sum _{k=0}^n   \frac{(q^{\lambda -h_1+l_1+\alpha _1} ;q)_k }{ (q;q)_k (q;q)_{n-k} } q^{k(k+1)/2} (-q^{h_2-l_1}t_2/t_1)^k  \Big] \nonumber 
\end{align}
is a solution of Eq.~(\ref{C-Heun}).
\end{thm}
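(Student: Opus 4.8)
The plan is to obtain each of the four solutions as the limit $q^{\alpha_2}\to 0$ of a suitable solution from Proposition~\ref{thm:thmHMST}, taken along the family on which $\lambda$ (equivalently the combination $\alpha_2+l_2$, expressed through the remaining parameters) is held fixed. As a preliminary step I would record precisely what is sketched in the Introduction: under this limit Eq.~(\ref{eq:qhypervar1}) converges coefficientwise to Eq.~(\ref{C-Heun}). Using $\alpha_2+l_2=h_1+h_2-l_1-\alpha_1+1-2\lambda$ one rewrites $q^{\alpha_1+\alpha_2}(x-q^{l_1-1/2}t_1)(x-q^{l_2-1/2}t_2)=q^{\alpha_1}(x-q^{l_1-1/2}t_1)(q^{\alpha_2}x-q^{h_1+h_2-l_1-\alpha_1+1/2-2\lambda}t_2)$ and checks $p=q^{h_1+h_2-\lambda+1/2}$, $q^{\alpha_1}+q^{\alpha_2}\to q^{\alpha_1}$, $q^{-l_2}\to 0$, and $E\to -q^{h_1+h_2-\lambda+1/2}(q^{-h_2}t_1+q^{-h_1}t_2+q^{-l_1}t_2)$, so that each coefficient polynomial of Eq.~(\ref{eq:qhypervar1}) tends to the corresponding one in Eq.~(\ref{C-Heun}); on this constrained family these coefficients moreover depend continuously on $q^{\alpha_2}$.

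I would then isolate the elementary limiting principle in play: the coefficients of each series in Proposition~\ref{thm:thmHMST} obey a linear recursion whose data depend continuously on $q^{\alpha_2}$ and on the coefficients of the equation, so that, whenever both the recursion data and the expansion point of the series admit finite limits as $q^{\alpha_2}\to 0$, the limiting series is a formal solution of Eq.~(\ref{C-Heun}). This reduces the proof to three tasks: (a) choosing the correct representative in Proposition~\ref{thm:thmHMST}; (b) checking that the relevant quantities converge; (c) identifying the limits with the stated formulas. For (a): part~(i) of Theorem~\ref{thm:scsol} arises from Proposition~\ref{thm:thmHMST}(i); part~(ii) from Proposition~\ref{thm:thmHMST}(ii) with $(i,i')=(1,2)$; parts~(iii) and~(iv) from Proposition~\ref{thm:thmHMST}(iii) with $(i,i')=(1,2)$ and $(2,1)$ respectively. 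For these choices the expansion point stays finite, whereas Proposition~\ref{thm:thmHMST}(ii) with $(i,i')=(2,1)$ degenerates because its expansion point $q^{l_2-1/2}t_2\to\infty$; this is why only one ${}_3\phi_2$ survives in part~(ii).

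For tasks (b) and (c) I would use the asymptotics $(q^{a}q^{-\alpha_2};q)_k\sim(-1)^k q^{ka+k(k-1)/2}q^{-k\alpha_2}$ and $(q^{a}q^{\alpha_2};q)_k\to 1$ as $q^{\alpha_2}\to 0$, together with $q^{l_2}=q^{h_1+h_2-l_1-\alpha_1+1-2\lambda}q^{-\alpha_2}$. In each case only a few factors are sensitive to the limit — typically $(q^{\alpha_1-\alpha_2+1};q)_n$ in part~(i), the single $q$-Pochhammer symbol carrying $q^{l_2}$, and powers of $q^{l_2}t_2$ or of $q^{-l_2}$ — and the powers of $q^{\alpha_2}$ they produce cancel exactly, leaving the finite $n$- and $k$-dependent exponents that reorganize into the closed forms in the theorem. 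For instance $(q^{\lambda+\alpha_2};q)_n\to 1$ turns the ${}_3\phi_2$ of Proposition~\ref{thm:thmHMST}(ii) into the one with a $0$ entry in~(\ref{eq:solssimpconf01}); in part~(i) one additionally relabels the inner sum by $\ell=n-k$, which produces the factor $q^{-\ell(2n-\ell-1)/2}$, and in parts~(iii)--(iv) the same procedure yields the factors $q^{k^2}$ and $q^{k(k+1)/2}$.

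The main obstacle is task (c) in the double-sum cases (i), (iii), (iv): one must carry out the $q$-power and sign bookkeeping while keeping the $n$- and $k$- (or $\ell$-) dependent exponents in exact form and then recognize the result as the displayed formula — there is no conceptual difficulty, only a nontrivial amount of algebra with $q$-Pochhammer symbols. As an independent verification, and an alternative self-contained proof, one may instead substitute each of the four series directly into Eq.~(\ref{C-Heun}) and check the resulting recursion among the coefficients, exactly as in the proof of Proposition~\ref{thm:thmHMST} in \cite{HMST}.
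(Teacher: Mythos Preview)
Your proposal is correct and follows essentially the same approach as the paper: obtain each of the four solutions as the limit $q^{\alpha_2}\to 0$ of the corresponding solution in Proposition~\ref{thm:thmHMST} (with exactly the identifications you list), justified by passing to the limit in the coefficient recursion. The paper writes out this recursion argument explicitly only for part~(ii) and merely asserts the analogous limits for (i), (iii), (iv), so your more detailed bookkeeping of the $q$-Pochhammer asymptotics and the relabeling $\ell=n-k$ in~(i) fills in what the paper leaves to the reader.
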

\begin{proof}
We show (ii).
It was shown in \cite{HMST} that the function 
\begin{align}
g(x) & = x^{-\alpha _1 } \sum _{n =0}^{\infty} a_n \left(\frac{x}{q^{l_1-1/2}t_1 };q \right)_n \label{eq:gxan}
\end{align}
is a solution of Eq.~(\ref{eq:qhypervar1}), if the coefficient $a_n$ satisfies 
\begin{align}
& (1-q^{h_1-l_1+n+1})(1-q^{h_2-l_1+n+1} t_2/t_1)(1-q^{n+1})a_{n+1} \\
& -q (1-q^{\lambda +\alpha_1+n})(1-q^{\lambda +\alpha_2 + n}) a_n \nonumber \\
& - q^{3}(1+q^{-1})(1-q^{h_1-l_1+n})(1-q^{h_2-l_1+n}t_2/t_1)(1-q^{n}) a_{n} \nonumber \\
&  +q^{4}(1+q^{-1})(1-q^{\lambda +\alpha_1+n-1})(1-q^{\lambda +\alpha_2+n-1})a_{n-1} \nonumber \\
&  +q^{5}(1-q^{h_1-l_1+n-1})(1- q^{h_2-l_1+n-1} t_2/t_1 )(1-q^{n-1})a_{n-1} \nonumber \\
&  -q^{6}(1-q^{\lambda +\alpha_1+n-2})(1-q^{\lambda +\alpha_2+n-2})a_{n-2} =0 \nonumber 
\end{align}
for all integer $n$, where $a_n =0$ for $n <0$, and the sequence
\begin{align}
& a_n = q^{n}\frac{(q^{\lambda +\alpha_1};q)_n (q^{\lambda +\alpha_2};q)_n }{(q^{h_1-l_1+1};q)_n(q^{h_2-l_1+1}t_2/t_1;q)_n(q;q)_n} \quad (n\geq 0)
\end{align}
satisfies the recursive relation.
Thus we recover Proposition \ref{thm:thmHMST} (ii) for the case $(i,i')=(1,2)$.
By taking the limit $q^{\alpha_2} \to 0 $, the function $g(x)$ of the form in Eq.~(\ref{eq:gxan}) is a solution of Eq.~(\ref{C-Heun}), if the coefficient $a_n$ satisfies 
\begin{align}
& (1-q^{h_1-l_1+n+1})(1-q^{h_2-l_1+n+1} t_2/t_1)(1-q^{n+1})a_{n+1} \\
& -q (1-q^{\lambda +\alpha_1+n}) a_n - q^{3}(1+q^{-1})(1-q^{h_1-l_1+n})(1-q^{h_2-l_1+n}t_2/t_1)(1-q^{n}) a_{n} \nonumber \\
&  +q^{4}(1+q^{-1})(1-q^{\lambda +\alpha_1+n-1})a_{n-1} \nonumber \\
&   +q^{5}(1-q^{h_1-l_1+n-1})(1- q^{h_2-l_1+n-1} t_2/t_1 )(1-q^{n-1})a_{n-1} \nonumber \\
&  -q^{6}(1-q^{\lambda +\alpha_1+n-2})a_{n-2} =0 \nonumber 
\end{align}
for all integer $n$, where $a_n =0$ for $n <0$, and it is shown by taking the limit $q^{\alpha_2} \to 0 $ that the sequence 
\begin{align}
& a_n = q^{n}\frac{(q^{\lambda +\alpha_1};q)_n }{(q^{h_1-l_1+1};q)_n(q^{h_2-l_1+1}t_2/t_1;q)_n(q;q)_n} \quad (n\geq 0)
\end{align}
satisfies the recursive relation.
Namely we obtained (ii) by taking the limit of the function in Proposition \ref{thm:thmHMST} (ii) for the case $(i,i')=(1,2)$ as $q^{\alpha_2} \to 0 $. 

We can obtain Theorem (i) by taking the limit of the function in Proposition \ref{thm:thmHMST} (i) as $q^{\alpha_2} \to 0 $.
We can also obtain (iii) (resp.~(iv)) by taking the limit of the function in Proposition \ref{thm:thmHMST} (iii) for the case $(i,i')=(1,2)$ (resp.~for the case $(i,i')=(2,1) $) as $q^{\alpha_2} \to 0 $.
\end{proof}
By considering the limit of the functions in Theorem \ref{thm:newsol} (i) and Theorem \ref{thm:newsol} (ii) for the case $(i,i')=(1,2) $, we may guess the solutions of Eq.~(\ref{C-Heun}) and we can actually establish the following theorem.
\begin{thm} \label{thm:scsol2}
(i) Set $(i,i')=(1,2) $ or $(2,1)$. Then the function
\begin{align}
 g (x) & = x^{\lambda } \sum _{n=0}^{\infty}  \frac{(q^{\lambda +\alpha _1 };q )_n  }{(q^{h_i - l_1 +1} t_i /t_1 ;q )_n  (q;q)_n } \Big( \frac{ q^{h_i+1/2} t_i}{x} ;q \Big)_n \Big( \frac{x}{q^{h_{i'}-1/2} t_{i'}} \Big)^n \label{eq:sols0003} \\
& \biggl( = x^{\lambda }\  _2 \phi_1 \biggl(q^{\lambda + \alpha_1}, \frac{ q^{h_i+1/2} t_i}{x} ; q^{h_i - l_1 +1} t_i /t_1 ; q, \frac{x}{q^{h_{i'}-1/2} t_{i'}}  \biggr) \biggr) \nonumber
\end{align}
is a solution of Eq.~(\ref{C-Heun}).\\
(ii) The function
\begin{align}
& g (x)= x^{-\alpha _1 } \sum _{n=0}^{\infty} c_n \frac{(q^{\lambda + \alpha _1 } ;q)_n }{(q^{1+h_2 -l_1} t_2/t_1 ;q )_n  } \Big( \frac{x}{ q^{l_1 -1/2} t_1} ;q \Big)_n \Big( \frac{q^{-\lambda - \alpha _1 +h_2 +1/2 } t_2}{x} \Big)^n , \\
& c_n =  \sum _{k =0}^n q^{-nk + k^2} \frac{ 1 }{(q^{h_1-l_1 +1};q)_{k } (q;q)_{n-k} (q;q)_{k} } ( q^{ h_1 -h_2 } t_1 /  t_2 )^{k} \nonumber 
\end{align}
is a solution of Eq.~(\ref{C-Heun}).
\end{thm}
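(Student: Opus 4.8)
The plan is to mirror the derivation of Theorem~\ref{thm:scsol}. Each function in the statement should arise as the formal $q^{\alpha_2}\to 0$ limit of a solution of Eq.~(\ref{eq:qhypervar1}) provided by Theorem~\ref{thm:newsol}, and the solution property should survive the limit: under the constraint used to produce Eq.~(\ref{C-Heun}) (namely $\alpha_2+l_2$, equivalently $\lambda$, held fixed), the $q$-difference operator of Eq.~(\ref{eq:qhypervar1}) degenerates coefficient by coefficient to that of Eq.~(\ref{C-Heun}), so applying the operator and passing to the limit commute as soon as every coefficient of the series has a finite limit. I would not try to deduce this from Theorem~\ref{thm:scsol} via the $q\leftrightarrow q^{-1}$ correspondence between Theorem~\ref{thm:newsol} and Proposition~\ref{thm:thmHMST}: under $q\to q^{-1}$ the linear factor of Eq.~(\ref{C-Heun}) moves from the $g(qx)$-term to the $g(x/q)$-term and one lands on the other confluent variant Eq.~(\ref{C2-Heun}) rather than on Eq.~(\ref{C-Heun}), so the limit has to be carried out afresh starting from Theorem~\ref{thm:newsol}.

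For part~(i) I would take the limit directly in the two forms of Theorem~\ref{thm:newsol}~(i). With $(i,i')=(1,2)$ one has $(q^{\lambda+\alpha_2};q)_n\to 1$ and, since formally $q^{l_2}\to\infty$, the denominator factor $(q^{h_1-l_2+1}t_1/t_2;q)_n\to(0;q)_n=1$; with $(i,i')=(2,1)$ one likewise has $(q^{\lambda+\alpha_2};q)_n\to 1$ and $(q^{h_2-l_2+1};q)_n\to 1$. In both cases the remaining factors contain no $\alpha_2$ or $l_2$, so the $_3\phi_2$ collapses termwise to the $_2\phi_1$ of part~(i); every coefficient has a finite limit, so the limit function solves Eq.~(\ref{C-Heun}), and this produces both choices $(i,i')=(1,2)$ and $(2,1)$.

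For part~(ii) I would run the same argument starting from Theorem~\ref{thm:newsol}~(ii) with $(i,i')=(1,2)$. The only delicate point is the inner coefficient $c_n$: as $q^{\alpha_2}\to 0$ one has $q^{l_2}\to\infty$, so in its $k$-th summand the Pochhammer $(q^{\lambda+\alpha_1-h_2+l_2};q)_k$ grows like $q^{kl_2}$ while the accompanying $(q^{l_2})^{-k}$ decays. Using the elementary limit $\lim_{q^{l_2}\to\infty}(q^{A+l_2};q)_k/q^{kl_2}=(-1)^kq^{kA+k(k-1)/2}$ with $A=\lambda+\alpha_1-h_2$, the $l_2$-dependence cancels term by term, the signs pair up, and the surviving powers of $q$ recombine (for instance $q^{-nk+k^2/2}\,q^{k(k-1)/2}\,q^{k/2}=q^{-nk+k^2}$) into exactly the $c_n$ and the prefactor of part~(ii); since each coefficient still converges, the limit solves Eq.~(\ref{C-Heun}). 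Alternatively, and closer to the proof of Theorem~\ref{thm:scsol}~(ii), I could write down the recursion that the ansatz of Theorem~\ref{thm:newsol}~(ii) forces on its outer coefficients, take the $q^{\alpha_2}\to 0$ limit of that recursion, and verify that the degenerate $c_n$-sequence satisfies it. I expect that last verification — a finite but somewhat intricate $q$-series identity — to be the main obstacle; everything else is the degeneration of Eq.~(\ref{eq:qhypervar1}) to Eq.~(\ref{C-Heun}) already set up for Theorem~\ref{thm:scsol}.
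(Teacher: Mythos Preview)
Your proposal is correct and follows essentially the same route as the paper: the paper obtains Theorem~\ref{thm:scsol2} by taking the formal limit $q^{\alpha_2}\to 0$ of the solutions in Theorem~\ref{thm:newsol}~(i) (both choices of $(i,i')$) and Theorem~\ref{thm:newsol}~(ii) with $(i,i')=(1,2)$, exactly as you describe, and then remarks separately that the $(i,i')=(2,1)$ case of Theorem~\ref{thm:newsol}~(ii) degenerates to the solution already given in Eq.~(\ref{eq:sols000}). Your computation of the limiting $c_n$ is accurate, and your alternative of passing to a recursion and verifying the degenerate sequence is precisely the style of argument written out for Theorem~\ref{thm:scsol}~(ii).
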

Note that, by the limit of the function in Theorem \ref{thm:newsol} (ii) for the case $(i,i')=(2,1) $ as $q^{\alpha_2} \to 0 $, we obtain Eq.~(\ref{eq:sols000}).

If $\lambda +\alpha _1$ is a negative integer and set $N= -\lambda -\alpha _1$, then the summation of each solution is finite, and the solution is a product of $x^{\lambda } $ and the polynomial of the variable $x$ of degree $N -1$.
The polynomial essentially coincides with the Big $q$-Laguerre polynomial (cf.~\cite{KLS}).

\section{Biconfluent limit} \label{sec:BClimit}
In the introduction, we introduced a variant of the biconfluent $q$-hypergeometric equation or the variant of the confluent $q$-hypergeometric equation of type $(0,2)$ as Eq.~(\ref{WC-Heun}), i.e.
\begin{align*}
& g(qx)+   q^{ 2\lambda +1} ( 1 - q^{-h_1 -1/2} t_1 ^{-1} x )(1 - q^{-h_2 -1/2 } t_2 ^{-1} x )g(x/q)\\
&-[q^{\alpha _1 + 2\lambda -h_1- h_2 } t_1 ^{-1} t_2 ^{-1} x^2 - q^{ \lambda +1/2}(q^{-h_1}t_1 ^{-1} + q^{-h_2}t_2 ^{-1})x +q^{ \lambda } (q+ 1) ]g(x)=0 . \nonumber 
\end{align*}
The equation has the symmetry of replacing $(t_1, h_1)$ with $(t_2 , h_2)$. 
By taking the limit $q\to 1$, we obtain the differential equation in Eq.~(\ref{eq:WCdiffeqgt}).

Eq.~(\ref{WC-Heun}) was obtained by the limit $q^{-l_1} \to 0 $ from a variant of singly confluent $q$-hypergeometric equation (i.e.~Eq.~(\ref{C-Heun})). 
We can obtain solutions of the variant of biconfluent $q$-hypergeometric equation by considering the limit of the solutions of the variant of singly confluent $q$-hypergeometric equation.
\begin{thm} \label{thm:bcsol}
(i) The function
\begin{align}
& x^{-\alpha _1 } \sum _{n =0}^{\infty} ( q^{ - \lambda - \alpha _1 + 1/2} x^{-1} )^{n } ( q^{\lambda + \alpha _1 } ; q )_{n } \sum _{\ell =0}^{n }  \frac{q^{- \ell (n -\ell )} ( q^{ h_1 } t_1 )^{n -\ell } (q^{ h_2 } t_2 ) ^{\ell } }{(q;q )_{n -\ell  } (q;q)_{\ell }} \label{eq:solbiconf00}
\end{align}
is a solution of Eq.~(\ref{WC-Heun}).\\
(ii) Set $(i,i')=(1,2) $ or $(2,1)$. Then the function
\begin{align}
& x^{-\alpha _1 } \Big[ \sum _{n=0}^{\infty} \Big( \frac{q^{h_i+1/2} t_i }{x}  ;q \Big)_n  (q^{\lambda +\alpha _1 };q )_n q^n 
\sum _{k=0}^n   \frac{q^{k^2 } ( q^{\lambda +\alpha _1 + h_i -h_{i'} }t_i/t_{i'} )^k }{ (q;q)_k (q;q)_{n-k} }  \Big] 
\end{align}
is a solution of Eq.~(\ref{WC-Heun}).\\
(iii) Set $(i,i')=(1,2) $ or $(2,1)$. Then the function
\begin{align}
 g (x) & = x^{\lambda } \sum _{n=0}^{\infty} \Big( \frac{q^{h_i+1/2} t_i }{x} ;q \Big)_n \frac{(q^{\lambda +\alpha _1 };q )_n  }{ (q;q)_n } \Big( \frac{x}{q^{h_{i'} -1/2} t_{i'}} \Big)^n  \label{eq:sols0004} \\
& \biggl( = x^{\lambda }\  _2 \phi_1 \biggl( q^{\lambda + \alpha_1} , \frac{q^{h_i+1/2} t_i }{x} ;0 ;q , \frac{x}{q^{h_{i'} -1/2} t_{i'}} \biggr) \biggr) \nonumber
\end{align}
is a solution of Eq.~(\ref{WC-Heun}).
\end{thm}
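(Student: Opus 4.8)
The plan is to prove Theorem~\ref{thm:bcsol} exactly as Theorem~\ref{thm:scsol} was proved, namely by pushing the already known solutions through a degeneration limit --- here the limit $q^{-l_1}\to 0$, which is the same limit that produced Eq.~(\ref{WC-Heun}) from Eq.~(\ref{C-Heun}). The expected correspondence is: part~(i) (Eq.~(\ref{eq:solbiconf00})) is the $q^{-l_1}\to 0$ limit of the solution in Theorem~\ref{thm:scsol}~(i) (Eq.~(\ref{eq:sols000})); part~(ii) for the case $(i,i')=(1,2)$ (resp.~$(2,1)$) is the limit of the solution in Theorem~\ref{thm:scsol}~(iii) (resp.~(iv)), i.e.~of Eq.~(\ref{eq:sols0001}) (resp.~Eq.~(\ref{eq:sols0002})); and part~(iii) is the limit of the solution in Theorem~\ref{thm:scsol2}~(i) (Eq.~(\ref{eq:sols0003})), in which the factor $(q^{h_i-l_1+1}t_i/t_1;q)_n$ simply tends to $1$.

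To make this rigorous I would, for a representative case, translate Eq.~(\ref{WC-Heun}) under the appropriate ansatz --- $g(x)=x^{-\alpha_1}\sum_{n\ge 0}a_n x^{-n}$ for parts~(i) and~(ii), and $g(x)=x^{\lambda}\sum_{n\ge 0}a_n(q^{h_i+1/2}t_i/x;q)_n(x/q^{h_{i'}-1/2}t_{i'})^n$ for part~(iii) --- into a linear recursion for the coefficients $a_n$, and verify that the explicit sequence satisfies it. The key observation is that this recursion is precisely the $q^{-l_1}\to 0$ limit of the recursion characterizing the corresponding solution of Eq.~(\ref{C-Heun}) (which in turn, as in the proof of Theorem~\ref{thm:scsol}, is a limit of the recursion for Eq.~(\ref{eq:qhypervar1})); so, once the limit of the coefficient sequences is shown to exist, the verification reduces --- by passing to the limit in identities already established --- to a routine computation, and the remaining cases follow the same way.

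The main obstacle is controlling the formal limit $q^{-l_1}\to 0$ inside the coefficients. In the inner ($\ell$- or $k$-)summations one meets $q$-Pochhammer factors that grow like $q^{ml_1}$ alongside power factors that decay like $q^{-ml_1}$, and one must extract the leading behavior carefully to see that the two recombine into a finite nonzero limit: for example the exponent $-\ell(2n-\ell-1)/2$ in Eq.~(\ref{eq:sols000}) must collapse, together with the surviving $q$-powers, to $q^{-\ell(n-\ell)}$, and the factors $q^{k(k+1)/2}$ in Eq.~(\ref{eq:sols0002}) must merge with the leading term of $(q^{\lambda-h_1+l_1+\alpha_1};q)_k$ into $q^{k^2}$. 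A second, more structural point is that the coefficient of $g(qx)$ degenerates from a linear polynomial in Eq.~(\ref{C-Heun}) to a constant in Eq.~(\ref{WC-Heun}), so the order and shape of the recursion change under the limit; hence one should re-derive the recursion attached to Eq.~(\ref{WC-Heun}) directly and confirm that the limiting sequence still solves it, rather than merely appealing to continuity of the passage to the limit. Once these points are settled, parts~(i)--(iii) follow uniformly, and one checks in addition that each summation remains a well-defined formal series --- finite inner sums, and a formal power series in $x^{-1}$ (or in $x$, after expanding $(q^{h_i+1/2}t_i/x;q)_n(x/q^{h_{i'}-1/2}t_{i'})^n$) for the outer sum --- so that when $\lambda+\alpha_1$ is a negative integer the solutions truncate to $x^{\lambda}$ or $x^{-\alpha_1}$ times a polynomial, as in the preceding sections.
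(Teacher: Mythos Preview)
Your proposal is correct and follows essentially the same approach as the paper: the paper does not give a separate proof but simply remarks after the theorem that the functions in (i), (ii), (iii) arise as the $q^{-l_1}\to 0$ limits of Eq.~(\ref{eq:sols000}), Eqs.~(\ref{eq:sols0001})--(\ref{eq:sols0002}), and Eq.~(\ref{eq:sols0003}) respectively, implicitly relying on the degeneration-of-recursion argument spelled out in the proof of Theorem~\ref{thm:scsol}. Your identification of which solution degenerates to which is exactly the paper's, and your additional discussion of the coefficient asymptotics and of re-deriving the recursion for Eq.~(\ref{WC-Heun}) is more explicit than what the paper provides.
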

Note that the function in Theorem \ref{thm:bcsol} (i) (resp.~(ii), (iii)) is obtained by taking the limit of the function in Eq.~(\ref{eq:sols000}) (resp. Eq.~(\ref{eq:sols0001}) or Eq.~(\ref{eq:sols0002}), Eq.~(\ref{eq:sols0003})) as $q^{-l_1} \to 0 $.

If $\lambda +\alpha _1$ is a negative integer and set $N= -\lambda -\alpha _1$, then the summation of each solution is finite, and the solution is a product of $x^{\lambda } $ and the polynomial of the variable $x$ of degree $N -1$.
The polynomial essentially coincides with the Al-Salam-Carlitz I polynomial (cf.~\cite{KLS}).

\section{Other confluences} \label{sec:otherconfl}
We consider other confluence processes such that the degree of the polynomial on the coefficient of $g(x/q)$ decreases.

We take the limit $q^{-\alpha_2} \to 0  $ in Eq.~(\ref{eq:qhypervar1}) with the condition that the value $h_2 -\alpha_2  = - h_1 + l_1 + l_2 + \alpha _1 -1 + 2\lambda $ is fixed.
Then we have
\begin{align}
\label{C2-Heun}
& (x-q^{l_1-1/2}t_1)(x-q^{l_2-1/2}t_2)g(qx) \\
& - q^{- h_1 + l_1 + l_2 + 2\lambda  -1/2}t_2 (x-q^{h_1 +1/2} t_1) g(x/q) \nonumber \\
&-[\ q^{ - \alpha _1} x^2 - q^{ l_1 + l_2 + \lambda -1/2} \left\{ q^{-l_2} t_1+\left(q^{-h_1}+q^{-l_1}\right)t_2\right\} x \nonumber \\
& \quad + q^{l_1 + l_2 + \lambda } (1 + q^{-1} )t_1 t_2 ] g(x) =0 . \nonumber 
\end{align} 
We may regard the parameter $\lambda $ to be independent from the other parameters $h_1$, $h_2$, $l_1$, $\alpha _1 $, $t_1$ and $t_2$.
We call it another variant of the singly confluent $q$-hypergeometric equation or the variant of the confluent $q$-hypergeometric equation of type $(2,1)$.

We can obtain solutions of the variant of the confluent $q$-hypergeometric equation of type $(2,1)$ by considering the limit of the solutions of the variant of $q$-hypergeometric equation.
\begin{thm} \label{thm:oscsol}
(i) Set $(i,i')=(1,2) $ or $(2,1)$. Then the function
\begin{align}
& x^{\lambda } \sum^{\infty}_{n=0}  ( q^{- \lambda - \alpha _1 + h_1 - l_{i'} +1} t_1/t_{i'} )^n \frac{(q^{\lambda +\alpha_1};q)_n }{(q^{h_1-l_i +1}t_1 /t_i ;q)_n (q;q)_n} \left(\frac{x}{q^{l_i -1/2} t_i };q \right)_n \label{eq:sols000a} \\
& \biggl( = x^{\lambda }\  _2 \phi_1 \biggl(q^{\lambda +\alpha_1} , \frac{x}{q^{l_i-1/2}t_i };q^{h_1-l_i +1}t_1 /t_i ; q, q^{ - \lambda - \alpha _1 + h_1 - l_{i'} +1} t_1/t_{i'} \biggr) \biggr) \nonumber
\end{align}
is a solution of Eq.~(\ref{C2-Heun}).\\
(ii) The function 
\begin{align}
g (x) = & x^{-\alpha _1 } \sum _{n=0}^{\infty} ( q^{1/2} x^{-1})^n ( q^{\lambda +\alpha _1 } ; q )_n \sum _{k=0}^n  \frac{ (q^{  \lambda +\alpha _1  -h_1 +l_1 } ;q)_{n-k }}{(q;q )_k (q;q)_{n-k}}  (q^{l_1 } t_1)^k (q ^{l_2 } t_2 ) ^{n-k} \label{eq:sols000b}
\end{align}
is a solution of Eq.~(\ref{C2-Heun}).\\
(iii) The function 
\begin{align}
& g (x)= x^{-\alpha _1 } \Big[ \sum _{n=0}^{\infty} ( q^{h_1 +1/2} t_1 /x  ;q )_n \frac{(q^{\lambda +\alpha _1 };q )_n }{(q^{h_1 - l_{2} +1} t_1/t_{2} ;q )_n  } q^n \sum _{k=0}^n   \frac{  q^{k(k+1)/2} (-q^{h_1 -l_{2} } t_1 /t_ {2})^k    }{(q^{h_1 -l_1+1};q)_{k} (q;q)_k (q;q)_{n-k} } \Big] \label{eq:sols000c}
\end{align}
is a solution of Eq.~(\ref{C2-Heun}).
\end{thm}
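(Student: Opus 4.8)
The plan is to obtain all three families exactly as Theorem~\ref{thm:scsol} was obtained, namely by degenerating the solutions of Eq.~(\ref{eq:qhypervar1}) listed in Proposition~\ref{thm:thmHMST}. The mechanism is the observation already implicit in the derivation of Eq.~(\ref{C2-Heun}): multiply Eq.~(\ref{eq:qhypervar1}) through by $q^{-\alpha_1-\alpha_2}$ and let $q^{-\alpha_2}\to 0$ with $h_2-\alpha_2$ (hence $\lambda$ and the remaining parameters $h_1,l_1,l_2,\alpha_1,t_1,t_2$) held fixed. Then $q^{-\alpha_1-\alpha_2}(q^{\alpha_1}+q^{\alpha_2})\to q^{-\alpha_1}$, the quadratic $q^{-\alpha_1-\alpha_2}(x-q^{h_1+1/2}t_1)(x-q^{h_2+1/2}t_2)$ sitting on the coefficient of $g(x/q)$ degenerates (using $h_2-\alpha_2$ fixed, which keeps $q^{-\alpha_1-\alpha_2+h_2+1/2}$ bounded) to the linear term $-q^{-h_1+l_1+l_2+2\lambda-1/2}t_2(x-q^{h_1+1/2}t_1)$, and the remaining coefficients converge to those of Eq.~(\ref{C2-Heun}) by the same sort of elementary computation. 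Consequently any formal solution of Eq.~(\ref{eq:qhypervar1}) whose coefficients admit finite termwise limits under this process produces a formal solution of Eq.~(\ref{C2-Heun}).

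I would then match each claimed solution with its ancestor and push through the limit as in the proof of Theorem~\ref{thm:scsol}(ii): part (i) is the limit of the ${}_3\phi_2$-solution of Proposition~\ref{thm:thmHMST}(ii) (both branches $(i,i')=(1,2)$ and $(2,1)$), part (ii) is the limit of the double-sum solution of Proposition~\ref{thm:thmHMST}(i), and part (iii) is the limit of the solution of Proposition~\ref{thm:thmHMST}(iii) for $(i,i')=(1,2)$; in every case the prefactor $x^{\lambda}$ or $x^{-\alpha_1}$ is unchanged, so no rescaling of $x$ is needed. Concretely, for each one I would write the candidate in the form $g(x)=x^{\mu}\sum_n a_n(\cdots;q)_n$, read off from Eq.~(\ref{C2-Heun}) the linear recursion that $a_n$ must satisfy, check that this recursion is the termwise limit of the recursion attached to Eq.~(\ref{eq:qhypervar1}) (each of whose coefficients is a Laurent polynomial in $q^{\alpha_2}$, so the limit is transparent after normalizing by the appropriate power of $q^{\alpha_2}$), and finally verify that the explicit expressions for $a_n$ in Theorem~\ref{thm:oscsol} are the limits of the corresponding expressions of Proposition~\ref{thm:thmHMST}. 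In parts (ii) and (iii) this last step is routine because the ``extra'' $q$-shifted factorials of the ancestor --- namely $(q^{\alpha_1-\alpha_2+1};q)_n$ together with $(q^{\lambda+\alpha_1-h_2+l_2};q)_k$ for (ii), and $(q^{\lambda-h_2+l_2+\alpha_1};q)_k$ for (iii) --- all carry a net factor $q^{-\alpha_2}$ in their bases and hence tend to $1$.

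The only delicate point, and the step I expect to cost the most care, is part (i). There the upper parameter $q^{\lambda+\alpha_2}$ of the ${}_3\phi_2$ and the lower parameter whose base contains $q^{h_2}$ both diverge, so one cannot pass to the limit factor by factor; instead I would pair them, using $(a;q)_n\sim(-a)^n q^{n(n-1)/2}$ as $a\to\infty$: in the branch $(i,i')=(1,2)$ this gives $\frac{(q^{\lambda+\alpha_2};q)_n}{(q^{h_2-l_1+1}t_2/t_1;q)_n}\to\bigl(q^{\lambda+\alpha_2-h_2+l_1-1}t_1/t_2\bigr)^n$, and substituting the constraint $h_2-\alpha_2=-h_1+l_1+l_2+\alpha_1-1+2\lambda$ turns the exponent, together with the $q^n$ already present in the coefficient, into the base $q^{-\lambda-\alpha_1+h_1-l_2+1}t_1/t_2$ of the geometric factor in Eq.~(\ref{eq:sols000a}); the residual ${}_3\phi_2$ then collapses to the ${}_2\phi_1$ displayed there, and the other branch is symmetric. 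As a safeguard against sign or exponent slips in all three parts, I would also run the independent check that each stated series, substituted into the three-term $q$-difference equation Eq.~(\ref{C2-Heun}), annihilates it --- equivalently, that each $a_n$ solves the relevant recursion directly --- a finite verification that does not use the limiting procedure at all.
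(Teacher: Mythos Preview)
Your proposal is correct and follows essentially the same approach as the paper: obtain each solution of Eq.~(\ref{C2-Heun}) as the termwise limit $q^{-\alpha_2}\to 0$ (with $h_2-\alpha_2$ fixed) of the corresponding solution in Proposition~\ref{thm:thmHMST}, matching (i)$\leftrightarrow$(ii), (ii)$\leftrightarrow$(i), (iii)$\leftrightarrow$(iii) with $(i,i')=(1,2)$, exactly as the paper states after the theorem. Your treatment is in fact more detailed than the paper's---in particular the pairing argument for the two diverging $q$-shifted factorials in part~(i) is spelled out, whereas the paper leaves this implicit---and the paper additionally remarks that part~(ii) can alternatively be recovered from Proposition~\ref{thm:thmHMST}(iii) with $(i,i')=(2,1)$.
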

We can obtain the functions in Theorem \ref{thm:oscsol} (i) (resp. (ii), (iii)) by taking the limit of the function in Proposition \ref{thm:thmHMST} (ii) (resp.  Proposition \ref{thm:thmHMST} (i), Proposition \ref{thm:thmHMST} (iii) for the case $(i,i')=(1,2)$) as $q^{-\alpha_2} \to 0 $. 
We can also obtain the functions in Theorem \ref{thm:oscsol} (ii) by taking the limit of the function in Proposition \ref{thm:thmHMST} (iii) for the case $(i,i')=(2,1)$.
\begin{thm} \label{thm:oscsol2}
(i) The function
\begin{align}
 g (x) & = x^{\lambda } \sum _{n=0}^{\infty} \Big( \frac{ q^{h_1+1/2} t_1}{x} ;q \Big)_n \frac{(q^{\lambda +\alpha _1 };q )_n q^{n(n-1)/2} ( -q^{ -\lambda - \alpha _1 - l_1 - l_2 + h_1 +3/2 } x/ t_{2} )^n }{(q^{h_1 - l_1 +1};q )_n (q^{h_{1} - l_{2} +1} t_{1} /t_{2} ;q )_n (q;q)_n } \label{eq:sols000d}
\\
& \biggl( = x^{\lambda }\  _2 \phi_2 \biggl(q^{\lambda +\alpha_1} , \frac{ q^{h_1+1/2} t_1}{x} ;q^{h_1 - l_1 +1} , q^{h_{1} - l_{2} +1} t_{1} /t_{2} ; q, q^{ -\lambda - \alpha _1 - l_1 - l_2 + h_1 +3/2 } x / t_{2} \biggr) \biggr) \nonumber
\end{align}
is a solution of Eq.~(\ref{C2-Heun}).\\
(ii) The function
\begin{align}
& g (x)= x^{-\alpha _1 } \sum _{n=0}^{\infty} c_n \Big( \frac{x}{ q^{l_1 -1/2} t_1} ;q \Big)_n \Big( - \frac{q^{-\lambda - \alpha _1 +l_1 } t_1}{x} \Big)^n (q^{\lambda + \alpha _1 } ;q)_n q^{-n^2/2 } , \label{eq:sols000e} \\
& c_n =  \sum _{k =0}^n q^{-nk + k^2/2} \frac{1 }{(q^{h_1-l_1 +1};q)_{k } (q;q)_{n-k} (q;q)_{k} } \Big(- \frac{q^{ -\lambda -\alpha _1+ h_1 +1 /2 } t_1 }{q^{l_2} t_2} \Big)^{k} \nonumber 
\end{align}
is a solution of Eq.~(\ref{C2-Heun}).\\
(iii) The function
\begin{align}
& g (x)= x^{-\alpha _1 } \sum _{n=0}^{\infty} c_n \Big( \frac{x}{ q^{l_2 -1/2} t_2} ;q \Big)_n \Big( \frac{q^{-\lambda - \alpha _1 +h_1 +1/2 } t_1}{x} \Big)^n \frac{(q^{\lambda + \alpha _1 } ;q)_n }{(q^{1+h_1 -l_2} t_1/t_2 ;q )_n  } , \label{eq:sols000f} \\
& c_n =   \sum _{k =0}^n q^{-nk } \frac{( q^{ \lambda + \alpha _1 - h_1 +l_1 } ;q)_{k}  }{(q;q)_{n-k} (q;q)_{k} } \Big( \frac{q^{ -\lambda -\alpha _1+ l_2 } t_2 }{q^{l_1} t_1} \Big)^{k} \nonumber
\end{align}
is a solution of Eq.~(\ref{C2-Heun}).
\end{thm}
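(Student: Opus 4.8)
The plan is to obtain the three functions in the same way Theorems~\ref{thm:scsol} and~\ref{thm:oscsol} were obtained from Proposition~\ref{thm:thmHMST}: as termwise limits, under $q^{-\alpha_2}\to 0$ with $h_2-\alpha_2$ held fixed (hence also $\lambda,h_1,l_1,l_2,\alpha_1,t_1,t_2$ fixed), of the solutions in Theorem~\ref{thm:newsol}. Concretely, part~(i) will come from Theorem~\ref{thm:newsol}(i) with $(i,i')=(1,2)$, part~(ii) from Theorem~\ref{thm:newsol}(ii) with $(i,i')=(1,2)$, and part~(iii) from Theorem~\ref{thm:newsol}(ii) with $(i,i')=(2,1)$.

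First I would record the infinitesimal form of the degeneration. Multiplying Eq.~(\ref{eq:qhypervar1}) by $q^{-\alpha_1-\alpha_2}$ does not change its solution set, and under the limit above the rescaled coefficient of $g(qx)$ tends to $(x-q^{l_1-1/2}t_1)(x-q^{l_2-1/2}t_2)$, the coefficient of $g(x/q)$ tends to $-q^{-h_1+l_1+l_2+2\lambda-1/2}t_2(x-q^{h_1+1/2}t_1)$ (because $q^{-\alpha_2}q^{h_2}=q^{h_2-\alpha_2}$ stays finite while $q^{-h_2-1/2}\to 0$), and the coefficient of $g(x)$ tends to that of Eq.~(\ref{C2-Heun}); that is, the rescaled Eq.~(\ref{eq:qhypervar1}) converges coefficientwise to Eq.~(\ref{C2-Heun}).

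The core of the argument is the termwise limit of each solution. In Theorem~\ref{thm:newsol}(i) with $(i,i')=(1,2)$ the only ingredients depending on the data tending to infinity are the pair $(q^{\lambda+\alpha_2};q)_n$ and $(x/(q^{h_2-1/2}t_2))^n$; since $q^{\lambda+\alpha_2}\to\infty$ we have $(q^{\lambda+\alpha_2};q)_n\sim(-q^{\lambda+\alpha_2})^nq^{n(n-1)/2}$, and combining this with $(x/(q^{h_2-1/2}t_2))^n$ and using $\lambda+\alpha_2-h_2+\tfrac12=-\lambda-\alpha_1+h_1-l_1-l_2+\tfrac32$ produces precisely the factor $q^{n(n-1)/2}(-q^{-\lambda-\alpha_1-l_1-l_2+h_1+3/2}x/t_2)^n$ of part~(i), all other $q$-shifted factorials being unchanged. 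For parts~(ii) and~(iii) the same bookkeeping applies: the $q$-shifted factorials containing $h_2$ blow up, the resulting $0\cdot\infty$ indeterminacies resolve against the likewise blowing-up powers of $q^{-\lambda-\alpha_1+h_2+1/2}t_2/x$, and the factors $(q^{\lambda+\alpha_1-h_2+l_2};q)_k$ tend to $1$ while $(q^{\lambda+\alpha_1-h_1+l_1};q)_k$ and $(q^{1+h_1-l_2}t_1/t_2;q)_n$ persist; the outcome is the $q^{-n^2/2}(-q^{-\lambda-\alpha_1+l_1}t_1/x)^n$ and $q^{-nk}$ factors displayed there.

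Since every Laurent coefficient of $g_\epsilon$ substituted into the rescaled Eq.~(\ref{eq:qhypervar1}) is a finite expression vanishing identically in $\epsilon=q^{-\alpha_2}$, and since the termwise limits just described show that every such coefficient has a well-defined limit, the limiting function is annihilated by the limiting operator, i.e.\ it solves Eq.~(\ref{C2-Heun}). Alternatively — and this is the route I would write out in detail, mirroring the proof of Theorem~\ref{thm:scsol} — one passes to the limit directly in the linear recursion on the expansion coefficients that characterizes solutions of Eq.~(\ref{eq:qhypervar1}) of each given shape, thereby obtains the recursion characterizing solutions of Eq.~(\ref{C2-Heun}), and checks by a $q$-binomial identity that the limiting explicit coefficients satisfy it. I expect the only genuine difficulty to be bookkeeping: resolving the $0\cdot\infty$ indeterminacies by pairing the blowing-up $q$-shifted factorials with the vanishing powers in exactly the right way, keeping every sign and every $q$-exponent, and, in (ii) and~(iii), checking that the limit may be taken inside the finite $k$-summation — which it may, summand by summand.
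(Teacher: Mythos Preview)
Your proposal is correct and matches the paper's approach exactly: the paper records, immediately after the statement, that parts (i), (ii), (iii) arise as the $q^{-\alpha_2}\to 0$ limits of Theorem~\ref{thm:newsol}(i) with $(i,i')=(1,2)$, Theorem~\ref{thm:newsol}(ii) with $(i,i')=(1,2)$, and Theorem~\ref{thm:newsol}(ii) with $(i,i')=(2,1)$, respectively, and gives no further detail. Your write-up in fact supplies more of the bookkeeping (the asymptotics $(q^{\lambda+\alpha_2};q)_n\sim(-q^{\lambda+\alpha_2})^n q^{n(n-1)/2}$ and the exponent identity $\lambda+\alpha_2-h_2+\tfrac12=-\lambda-\alpha_1+h_1-l_1-l_2+\tfrac32$) than the paper does.
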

The function in Theorem \ref{thm:oscsol2} (i) (resp. (ii), (iii)) appears as the limit of the function in Theorem \ref{thm:newsol} (i) for the case $(i,i')=(1,2) $ (resp. Theorem \ref{thm:newsol} (ii) for the case $(i,i')=(1,2) $, Theorem \ref{thm:newsol} (ii) for the case $(i,i')=(2,1) $) as $q^{-\alpha_2} \to 0 $.

To obtain a variant of another biconfluent $q$-hypergeometric equation, we take the limit $ q^{-h_1} \to 0 $ in Eq.~(\ref{C2-Heun}).
Then we have
\begin{align}
& q^{ - 2\lambda -1}  (1 - q^{-l_1+ 1/2}  t_1^{-1} x)( 1 -q^{-l_2 + 1/2} t_2^{-1} x )g(qx) +  g(x/q) \label{WC2-Heun} \\
& -[ q^{ - \alpha _1 - l_1 - l_2 - 2\lambda } t_1^{-1} t_2^{-1} x^2 - q^{ - \lambda -1/2 } ( q^{-l_1} t_1 ^{-1} + q^{-l_2} t_2 ^{-1} ) x + q^{- \lambda } (1+q^{-1}) ] g(x) =0 . \nonumber 
\end{align} 
We call it another variant of the biconfluent $q$-hypergeometric equation or the variant of the confluent $q$-hypergeometric equation of type $(2,0)$.
The equation has the symmetry of replacing $(t_1, l_1)$ with $(t_2 , l_2)$. 
By taking the limit $q\to 1$, we essentially obtain the Hermite-Weber differential equation (see section \ref{sec:limdifftial}).

We can obtain solutions of the variant of the confluent $q$-hypergeometric equation of type $(2,0)$ by considering the limit of the solutions of the variant of the confluent $q$-hypergeometric equation of type $(2,1)$.
\begin{thm} \label{thm:obcsol2}
(i) Set $(i,i')=(1,2) $ or $(2,1)$. Then the function
\begin{align}
& x^{\lambda } \sum^{\infty}_{n=0}  ( - q^{- \lambda - \alpha _1 + l_i - l_{i'} } t_i/t_{i'} )^n \frac{q^{-n(n-1)/2}(q^{\lambda +\alpha_1};q)_n }{(q;q)_n} \left(\frac{x}{q^{l_i -1/2} t_i };q \right)_n \label{eq:sols000x} \\
& \biggl( =  x^{\lambda }\  _2 \phi_0 \biggl(q^{\lambda +\alpha_1} , \frac{x}{q^{l_i-1/2}t_i }; - ; q, q^{ - \lambda - \alpha _1 + l_i - l_{i'} } t_i/t_{i'} \biggr) \biggr) \nonumber
\end{align}
is a solution of Eq.~(\ref{WC2-Heun}).\\
(ii) The function 
\begin{align}
g (x) = & x^{-\alpha _1 } \sum _{n=0}^{\infty} (q^{1/2} x^{-1})^n ( q^{\lambda +\alpha _1 } ; q )_n \sum _{k=0}^n  \frac{ (q^{l_1 } t_1)^k (q ^{l_2 }t_2 ) ^{n-k} }{(q;q )_k (q;q)_{n-k}} 
\end{align}
is a solution of Eq.~(\ref{WC2-Heun}).\\
(iii) Set $(i,i')=(1,2) $ or $(2,1)$. Then the function
\begin{align}
& g (x)= x^{-\alpha _1 } \sum _{n=0}^{\infty} c_n \Big( \frac{x}{ q^{l_i -1/2} t_i} ;q \Big)_n \Big( - \frac{q^{-\lambda - \alpha _1 +l_i } t_i}{x} \Big)^n (q^{\lambda + \alpha _1 } ;q)_n q^{-n^2/2 } ,  \\
& c_n =  \sum _{k =0}^n q^{-nk } \frac{1 }{ (q;q)_{n-k} (q;q)_{k} } \Big( \frac{q^{ -\lambda -\alpha _1+ l_i -l_{i'} } t_i }{ t_{i'}} \Big)^{k} \nonumber 
\end{align}
is a solution of Eq.~(\ref{WC2-Heun}).
\end{thm}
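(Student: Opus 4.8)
The plan is to follow the method used in the proof of Theorem~\ref{thm:scsol}: substitute a series of the appropriate shape into Eq.~(\ref{WC2-Heun}), collect the coefficient of each basis element to obtain a finite-range linear recursion for the coefficients, and verify that the proposed sequences satisfy it. The organizing observation is that, up to the overall nonzero constant $q^{l_1+l_2+2\lambda}t_1t_2$, Eq.~(\ref{WC2-Heun}) is the coefficientwise limit of Eq.~(\ref{C2-Heun}) as $q^{-h_1}\to 0$ (this constant is the limit of the coefficient of $g(x/q)$ in Eq.~(\ref{C2-Heun})), and that each function in Theorem~\ref{thm:obcsol2} is the termwise limit of a solution of Eq.~(\ref{C2-Heun}) already established: part~(i) is the limit of Theorem~\ref{thm:oscsol}(i), part~(ii) of Theorem~\ref{thm:oscsol}(ii), and part~(iii) of Theorem~\ref{thm:oscsol2}(ii)--(iii) (according to the choice of $(i,i')$). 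Consequently the recursion attached to Eq.~(\ref{WC2-Heun}) in a given basis is the $q^{-h_1}\to 0$ limit of the recursion attached to Eq.~(\ref{C2-Heun}) in that basis; since every coefficient of the $q$-difference equation applied to the series, expanded in the chosen basis, is a finite expression in $q^{-h_1}$ and the coefficients $a_n$, it suffices to check that the relevant sequences converge and to pass to the limit.

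For part~(i) I would take $g(x)=x^{\lambda}\sum_{n\ge 0}a_n\,(x/(q^{l_i-1/2}t_i);q)_n$, record the recursion for $a_n$ (analogous to the one displayed in the proof of Theorem~\ref{thm:scsol}), and observe that it is the limit of the corresponding recursion for Eq.~(\ref{C2-Heun}). The point needing attention is that the factor $(q^{h_i-l_i+1}t_i/t_i;q)_n$ in the denominator and the prefactor $(q^{-\lambda-\alpha_1+h_1-l_{i'}+1}t_1/t_{i'})^n$ in the numerator of the Theorem~\ref{thm:oscsol}(i) coefficients both diverge as $q^{-h_1}\to 0$; using the asymptotics $(q^{h_1+c}z;q)_n\sim(-1)^nq^{n(h_1+c)+n(n-1)/2}z^n$ one checks the divergences cancel and the quotient converges to $(-q^{-\lambda-\alpha_1+l_i-l_{i'}}t_i/t_{i'})^n\,q^{-n(n-1)/2}(q^{\lambda+\alpha_1};q)_n/(q;q)_n$, which is exactly the coefficient in Theorem~\ref{thm:obcsol2}(i). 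For parts~(ii) and (iii) the ansatz is $g(x)=x^{-\alpha_1}\sum_{n\ge 0}b_n(x)$ where each $b_n(x)$ carries a finite inner sum over $k$; here the termwise limit is immediate once one notes that $(q^{\lambda+\alpha_1-h_1+l_1};q)_{n-k}\to 1$ in part~(ii) and applies the same $q$-Pochhammer asymptotics to $(q^{h_1-l_1+1};q)_k$ in part~(iii) to recover the stated closed forms for $c_n$; the outer recursions for Eq.~(\ref{WC2-Heun}) are then the limits of those for Eq.~(\ref{C2-Heun}) solved by the sequences of Theorems~\ref{thm:oscsol}(ii) and \ref{thm:oscsol2}(ii)--(iii).

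The main work, as in the proof of Theorem~\ref{thm:scsol}, is bookkeeping: one must make sure that no leading terms of the recursion cancel in the limit $q^{-h_1}\to 0$, so that the deduction ``the limit sequence solves the limit recursion'' is not vacuous. Since Eq.~(\ref{WC2-Heun}) keeps a genuine three-shift structure with a nonzero (constant) coefficient on $g(x/q)$ and a quadratic coefficient on $g(qx)$, no such degeneration occurs, and the recursion has the same range as the one for Eq.~(\ref{C2-Heun}). Alternatively, each of the three functions can be verified directly by substitution into Eq.~(\ref{WC2-Heun}), in the same manner as Theorem~\ref{thm:newsol}; I would keep this as a fallback if the limit argument turns out to need a delicate control of the Pochhammer cancellations. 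Finally, the second choice $(i,i')=(2,1)$ in parts~(i) and (iii) also follows from the first by the $(t_1,l_1)\leftrightarrow(t_2,l_2)$ symmetry of Eq.~(\ref{WC2-Heun}).
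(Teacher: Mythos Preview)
Your proposal is correct and follows essentially the same approach as the paper: obtain each solution of Eq.~(\ref{WC2-Heun}) as the $q^{-h_1}\to 0$ limit of the corresponding solution of Eq.~(\ref{C2-Heun}), with part~(i) coming from Theorem~\ref{thm:oscsol}(i), part~(ii) from Theorem~\ref{thm:oscsol}(ii), and part~(iii) from Theorem~\ref{thm:oscsol2}(ii)--(iii). The paper is in fact terser than you are---it simply asserts these limits without spelling out the Pochhammer asymptotics or the recursion argument---so your discussion of the cancellation $(q^{h_1+c}z;q)_n\sim(-1)^n q^{n(h_1+c)+n(n-1)/2}z^n$ and of the survival of the three-shift structure is already more than the paper supplies; your use of the $(t_1,l_1)\leftrightarrow(t_2,l_2)$ symmetry for the second choice of $(i,i')$ is a legitimate shortcut, though the paper instead takes both choices directly from the two source solutions.
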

We can obtain the function in Theorem \ref{thm:obcsol2} (i) (resp.~Theorem \ref{thm:obcsol2} (ii)) by taking the limit of the function in Eq.~(\ref{eq:sols000a}) (resp.~Eq.~(\ref{eq:sols000b})) as $q^{-h_1} \to 0$. 
We can also obtain the function in Theorem \ref{thm:obcsol2} (iii) for the case $(i.i')=(1,2)$ (resp. the case $(i.i')=(2,1)$) by taking the limit of the function in Eq.~(\ref{eq:sols000e}) (resp.~Eq.~(\ref{eq:sols000f})) as $q^{-h_1} \to 0$.

\section{Gauge transformation} \label{sec:gauge} 

We investigate the gauge transformation of linear difference equations.
We apply it to the correspondence between the variant of the confluent $q$-hypergeometric equation of type $(i,j)$ and that of type $(j,i)$.
We set $(\alpha x;q) _{\infty } = \prod _{i=0}^{\infty } (1- q^i \alpha x) $.
\begin{prop} \label{prop:GT}
(i) If $y(x)$ is a solution of the difference equation 
\begin{equation}
(1- \alpha x ) a(x) g(x/q) + b(x) g(x) +  c(x) g(qx) =0,
\label{eq:qDeq1-1}
\end{equation}
then the function $u (x)= (\alpha q x ;q) _{\infty } y(x) $ satisfies 
\begin{equation}
 a(x) g(x/q) + b(x) g(x) + (1- \alpha q x ) c(x) g(qx) =0 .
\end{equation}
(ii) If $y(x)$ is a solution of the difference equation 
\begin{equation}
a(x) g(x/q) + b(x) g(x) + (1- \alpha x ) c(x) g(qx) =0,
\end{equation}
then the function $u (x)= y(x) / (\alpha x;q) _{\infty } $ satisfies 
\begin{equation}
(1-\alpha x/q) a(x) g(x/q) + b(x) g(x) + c(x) g(qx) =0 .
\end{equation}
\end{prop}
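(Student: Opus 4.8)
The plan is to verify both statements by direct substitution, exploiting the single $q$-shift functional equation satisfied by the infinite product $(\alpha x;q)_\infty$. The key algebraic fact is the identity $(\alpha x;q)_\infty = (1-\alpha x)(\alpha q x;q)_\infty$, equivalently $(\alpha q x;q)_\infty = (\alpha x;q)_\infty/(1-\alpha x)$, together with its shifted forms $(\alpha q^2 x;q)_\infty = (\alpha q x;q)_\infty/(1-\alpha q x)$ and $(\alpha x;q)_\infty\big|_{x\to x/q} = (\alpha x/q;q)_\infty = (1-\alpha x/q)(\alpha x;q)_\infty$. These let one convert the product evaluated at $x/q$, $x$, and $qx$ into a common factor times explicit rational prefactors.

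For part (i), I would set $u(x) = (\alpha q x;q)_\infty\, y(x)$ and compute $u(x/q)$, $u(x)$, $u(qx)$ in terms of $y$: using the identities above, $u(x/q) = (\alpha x;q)_\infty y(x/q) = (1-\alpha x)(\alpha q x;q)_\infty y(x/q)$, while $u(qx) = (\alpha q^2 x;q)_\infty y(qx) = (\alpha q x;q)_\infty y(qx)/(1-\alpha q x)$. Substituting into $a(x)u(x/q) + b(x)u(x) + (1-\alpha q x)c(x)u(qx) = 0$ and dividing through by the common factor $(\alpha q x;q)_\infty$ yields exactly $(1-\alpha x)a(x)y(x/q) + b(x)y(x) + c(x)y(qx) = 0$, which holds by hypothesis. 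Part (ii) is the mirror image: with $u(x) = y(x)/(\alpha x;q)_\infty$ one has $u(x/q) = y(x/q)/[(1-\alpha x/q)(\alpha x;q)_\infty]$ and $u(qx) = y(qx)/[(1-\alpha x)\cdots]$ after the appropriate product identity, and substitution into the target equation followed by clearing denominators reproduces the hypothesized equation $a(x)y(x/q) + b(x)y(x) + (1-\alpha x)c(x)y(qx) = 0$.

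Since this is a formal computation with $q$-shift operators and the only structural ingredient is the elementary product recursion for $(\alpha x;q)_\infty$, there is essentially no obstacle; the one point requiring a little care is bookkeeping the direction of the shift, i.e.\ making sure that in (ii) the factor $(1-\alpha x/q)$ attaches to $a(x)$ (the coefficient of $g(x/q)$) and not elsewhere, and that in (i) the new factor $(1-\alpha q x)$ sits on $c(x)$ (the coefficient of $g(qx)$). I would present the computation for (i) in full and remark that (ii) follows by the same manipulation, or alternatively observe that (ii) is obtained from (i) by the substitution $g \mapsto g$, reading the equivalence backwards with $\alpha$ replaced appropriately, so that the two halves are genuinely inverse gauge transformations.
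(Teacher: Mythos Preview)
Your proposal is correct and follows essentially the same approach as the paper: the paper states the product identities $(\alpha x;q)_\infty = (1-\alpha x)(\alpha q x;q)_\infty$ and $(\alpha q^2 x;q)_\infty = (\alpha q x;q)_\infty/(1-\alpha q x)$, substitutes $u(x)=(\alpha q x;q)_\infty y(x)$ into the target equation, factors out $(\alpha q x;q)_\infty$, and recovers the hypothesized equation for $y$, then notes that (ii) is shown similarly. Your write-up is slightly more detailed but the argument is the same.
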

\begin{proof}
It follows from the definition that $(\alpha x;q) _{\infty } = (1-\alpha x ) (\alpha q x;q) _{\infty }$ and $(\alpha q^2 x ;q) _{\infty }= (\alpha q x;q) _{\infty } / (1-\alpha q x) $.
If the function $y(x)$ satisfies Eq.~(\ref{eq:qDeq1-1}) and $ u(x) =  (\alpha q x;q) _{\infty } y(x )$, then we have 
\begin{align}
& a(x) u(x/q) + b(x) u(x) + (1- \alpha q x ) c(x) u(qx) \\
& = (\alpha q x;q) _{\infty } \{  (1- \alpha x )  a(x) y(x/q) + b(x) y(x) + c(x) y(qx) \}  =0 . \nonumber
\end{align}
Hence we obtain (i). 
We can show (ii) similarly.
\end{proof}
By applying \ref{prop:GT}, we obtain correspondences among the variants of the confluent $q$-hypergeometric equation.
A correspondence between Eq.~(\ref{C-Heun}) and Eq.~(\ref{C2-Heun}) is given as follows.
\begin{prop} \label{prop:CC2corresp}
Assume that function $y(x)$ satisfies Eq.~(\ref{C-Heun}), which is a variant of the confluent $q$-hypergeometric equation of type $(1,2)$. 
Then the function $u(x) = (q^{-h_2 +1/2} t_2^{-1} x;q)_{\infty } y(x ) $ is a solution of the equation
\begin{align}
& (x-q^{\tilde{l}_1-1/2}t_1)(x-q^{\tilde{l}_2-1/2}t_2)g(qx) \label{C2-Heun01} \\
& - q^{- \tilde{h}_1 + \tilde{l}_1 + \tilde{l}_2 + 2\lambda  -1/2}t_2 (x-q^{\tilde{h}_1 +1/2} t_1) g(x/q) \nonumber \\
&-[\ q^{ - \tilde{\alpha }_1} x^2 - q^{ \tilde{l}_1 + \tilde{l}_2 + \lambda -1/2} \{ q^{-\tilde{l}_2} t_1+ ( q^{-\tilde{h}_1}+q^{-\tilde{l}_1} ) t_2 \}  x \nonumber \\
& \quad + q^{\tilde{l}_1 + \tilde{l}_2 + \lambda } (q+1)t_1 t_2 ] g(x) =0 , \nonumber  
\end{align}
where
\begin{align}
& \tilde{l}_1= l_1,\; \tilde{l}_2 =h_2, \; \tilde{h}_1 =h_1,\; \tilde{\alpha }_1 + \alpha _1 = h_1 -l_1 + 1 - 2\lambda .
\label{eq:propCparam}
\end{align}
Note that Eq.~(\ref{C2-Heun01}) is a variant of the confluent $q$-hypergeometric equation of type $(2,1)$ (see Eq.~(\ref{C2-Heun})).
Conversely, if the function $u(x) $ satisfies Eq.~(\ref{C2-Heun01}), then the function $ y(x) = u(x) / (q^{-\tilde{l}_2 +1/2} t_2^{-1} x;q)_{\infty } $ satisfies Eq.~(\ref{C-Heun}) where the relationship among the parameters is given by Eq.~(\ref{eq:propCparam}).
\end{prop}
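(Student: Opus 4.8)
The plan is to deduce the proposition from the gauge transformation of Proposition~\ref{prop:GT}, applied to Eq.~(\ref{C-Heun}) with a suitable choice of the parameter $\alpha$. First I would cast Eq.~(\ref{C-Heun}) in the shape of Eq.~(\ref{eq:qDeq1-1}): the coefficient of $g(x/q)$ in Eq.~(\ref{C-Heun}) is the quadratic $(x-q^{h_1+1/2}t_1)(x-q^{h_2+1/2}t_2)$, whose factor $(x-q^{h_2+1/2}t_2)$ equals $-q^{h_2+1/2}t_2\,(1-q^{-h_2-1/2}t_2^{-1}x)$. So, setting $\alpha=q^{-h_2-1/2}t_2^{-1}$ and $a(x)=-q^{h_2+1/2}t_2\,(x-q^{h_1+1/2}t_1)$ and taking $b(x),c(x)$ to be the coefficients of $g(x)$ and $g(qx)$ in Eq.~(\ref{C-Heun}), the equation becomes exactly Eq.~(\ref{eq:qDeq1-1}). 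Since $\alpha q=q^{-h_2+1/2}t_2^{-1}$, the gauge factor $(\alpha q x;q)_{\infty}$ produced by Proposition~\ref{prop:GT}(i) is precisely the $(q^{-h_2+1/2}t_2^{-1}x;q)_{\infty}$ in the statement.

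By Proposition~\ref{prop:GT}(i), the function $u(x)=(q^{-h_2+1/2}t_2^{-1}x;q)_{\infty}\,y(x)$ then satisfies $a(x)g(x/q)+b(x)g(x)+(1-\alpha q x)c(x)g(qx)=0$. The next step is to identify this with a variant of the confluent $q$-hypergeometric equation of type $(2,1)$. Using $(1-\alpha q x)=-q^{-h_2+1/2}t_2^{-1}(x-q^{h_2-1/2}t_2)$ one finds $(1-\alpha q x)c(x)=q^{h_1-l_1-2\lambda+1}(x-q^{l_1-1/2}t_1)(x-q^{h_2-1/2}t_2)$; dividing the whole equation by the constant $q^{h_1-l_1-2\lambda+1}$ makes the coefficient of $g(qx)$ the monic quadratic with zeros $q^{l_1-1/2}t_1$ and $q^{h_2-1/2}t_2$, which forces $\tilde l_1=l_1$, $\tilde l_2=h_2$. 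I would then check term by term that, after this rescaling, the coefficient of $g(x/q)$ becomes $-q^{-h_1+l_1+h_2+2\lambda-1/2}t_2(x-q^{h_1+1/2}t_1)$, matching Eq.~(\ref{C2-Heun01}) with $\tilde h_1=h_1$, and that the coefficient of $g(x)$ matches with $q^{-\tilde\alpha_1}=q^{\alpha_1-h_1+l_1+2\lambda-1}$, i.e.\ with $\tilde\alpha_1+\alpha_1=h_1-l_1+1-2\lambda$. These are exactly the relations in~(\ref{eq:propCparam}), which settles the forward direction.

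For the converse I would apply Proposition~\ref{prop:GT}(ii): writing the coefficient of $g(qx)$ in Eq.~(\ref{C2-Heun01}) as $-q^{\tilde l_2-1/2}t_2\,(1-q^{-\tilde l_2+1/2}t_2^{-1}x)(x-q^{\tilde l_1-1/2}t_1)$ puts it in the form required by part~(ii) with $\alpha=q^{-\tilde l_2+1/2}t_2^{-1}$, which, since $\tilde l_2=h_2$, is again the gauge factor of the statement; dividing by the appropriate power of $q$ and refactoring $(1-q^{-h_2-1/2}t_2^{-1}x)(x-q^{h_1+1/2}t_1)$ as a scalar multiple of $(x-q^{h_1+1/2}t_1)(x-q^{h_2+1/2}t_2)$ recovers Eq.~(\ref{C-Heun}). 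Alternatively, one may simply observe that $y\mapsto(q^{-h_2+1/2}t_2^{-1}x;q)_{\infty}\,y$ is invertible, so the converse is automatic once the forward implication and the parameter dictionary~(\ref{eq:propCparam}) are established. All of the conceptual content sits in Proposition~\ref{prop:GT}; the only real work, and hence the main obstacle, is the bookkeeping of powers of $q$ — one must track the normalizing constant $q^{h_1-l_1-2\lambda+1}$ and verify that, after rescaling, all three polynomial coefficients of the gauged equation coincide with those of Eq.~(\ref{C2-Heun01}) under $\tilde l_1=l_1$, $\tilde l_2=h_2$, $\tilde h_1=h_1$, $\tilde\alpha_1+\alpha_1=h_1-l_1+1-2\lambda$.
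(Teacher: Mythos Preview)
Your proposal is correct and follows essentially the same route as the paper: both apply Proposition~\ref{prop:GT}(i) with $\alpha=q^{-h_2-1/2}t_2^{-1}$ to strip the factor $(1-q^{-h_2-1/2}t_2^{-1}x)$ from the $g(x/q)$ coefficient and attach $(1-q^{-h_2+1/2}t_2^{-1}x)$ to the $g(qx)$ coefficient, then match with Eq.~(\ref{C2-Heun01}). The only cosmetic difference is that the paper first divides Eq.~(\ref{C-Heun}) through by $q^{h_1+h_2-\lambda}t_1t_2$ to put all coefficients in the form $(1-\cdots x)$ before invoking Proposition~\ref{prop:GT}, whereas you keep the original monic-in-$x$ form and divide by the constant $q^{h_1-l_1-2\lambda+1}$ at the end; the bookkeeping and the resulting parameter dictionary~(\ref{eq:propCparam}) are identical.
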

\begin{proof}
It follows from Eq.~(\ref{C-Heun}) that
\begin{align*}
& (1 -q^{-l_1+ 1/2} t_1^{-1} x)g(qx)+ q^{2\lambda + 1} (1- q^{-h_1 - 1/2} t_1^{-1} x)(1-  q^{-h_2 - 1/2}t_2^{-1} x )g(x/q)\\
& -[ q^{2\lambda + \alpha _1 -h_1 - h_2 } t_1^{-1} t_2^{-1} x^2 - q^{\lambda +1/2}(q^{-h_2} t_2^{-1} +q^{-h_1} t_1^{-1} +q^{-l_1} t_1^{-1} )x \nonumber \\
& \qquad + q^{ \lambda }(q+ 1) ]g(x)=0 . \nonumber 
\end{align*}
We apply Proposition \ref{prop:GT} (i).
Set $\alpha = q^{-h_2 - 1/2} t_1^{-1}$. Then the function $u (x)= (q^{-h_2 +1/2} t_2^{-1} x ;q) _{\infty } y(x) $ satisfies
\begin{align*}
&  (1 -q^{-l_1+ 1/2} t_1^{-1} x) (1-  q^{-h_2 + 1/2}t_2^{-1} x ) g(qx)+ q^{2\lambda + 1} (1- q^{-h_1 - 1/2} t_1^{-1} x)g(x/q)\\
&-[ q^{2\lambda + \alpha _1 -h_1 - h_2 } t_1^{-1} t_2^{-1} x^2 - q^{\lambda +1/2}(q^{-h_2} t_2^{-1} +q^{-h_1} t_1^{-1} +q^{-l_1} t_1^{-1} )x \nonumber \\
& \qquad + q^{ \lambda }(q+1) ]g(x)=0 . \nonumber
\end{align*}
On the other hand, Eq.~(\ref{C2-Heun01}) is rewritten as 
\begin{align*}
&  ( 1 -q^{-\tilde{l}_1+1/2}t_1^{-1} x )( 1 -q^{-\tilde{l}_2+1/2}t_2^{-1} x )g(qx) + q^{2\lambda +1 } (1- q^{- \tilde{h}_1 -1/2} t_1^{-1} x ) g(x/q) \\
&-[ q^{ - \tilde{\alpha }_1 -\tilde{l}_1 - \tilde{l}_2 +1 } t_1^{-1} t_2^{-1} x^2 - q^{ \lambda +1/2} ( q^{-\tilde{l}_2} t_2^{-1} + q^{-\tilde{h}_1} t_1^{-1} + q^{-\tilde{l}_1} t_1^{-1} ) x \nonumber \\
& \qquad + q^{ \lambda } (q+1) ] g(x) =0 . \nonumber 
\end{align*}
By these expressions, we obtain the proposition.
\end{proof}
As a consequence of Proposition \ref{prop:CC2corresp}, we obtain the following series solutions at $x=\infty $.
\begin{prop}
(i) The functions
\begin{align}
& g_1 (x)= x^{-\alpha _1 } \sum _{n =0}^{\infty} ( q^{ - \lambda - \alpha _1 +h_1 + 1/2} t_1 x^{-1} )^n ( q^{\lambda + \alpha _1 } ; q )_{n } \\
& \qquad \qquad \qquad \cdot \sum _{\ell =0}^{n } \frac{(q^{  \lambda +\alpha _1  -h_1 +l_1 } ;q)_{\ell }}{(q;q )_{n- \ell } (q;q)_{\ell }} q^{-\ell ( 2 n - \ell -1)/2}  (- q^{ -\lambda -\alpha _1 - l_1 + h_2  }t_2 /t_1 ) ^{\ell } , \nonumber \\
 & g_2(x) = ((q^{-h_2 +1/2} t_2^{-1} x;q)_{\infty } )^{-1} x^{ 2\lambda  + \alpha _1 - h_1 + l_1 - 1 } \sum _{n=0}^{\infty} ( q^{1/2} x^{-1})^n  \nonumber \\
& \qquad \qquad \cdot ( q^{-\lambda  -\alpha _1 + h_1 -l_1 + 1 } ; q )_n \sum _{k=0}^n  \frac{ (q^{ -\lambda  -\alpha _1 + 1 } ;q)_{n-k }}{(q;q )_k (q;q)_{n-k}}  (q^{l_1 } t_1)^k (q ^{h_2 } t_2 ) ^{n-k} \nonumber
\end{align}
are solutions of Eq.~(\ref{C-Heun}).\\
(ii)
The functions
\begin{align}
& g_3(x) = (q^{-l_2 +1/2} t_2^{-1} x;q)_{\infty } x^{ 2\lambda + \alpha _1 - h_1 +l_1 - 1 } \sum _{n =0}^{\infty} ( q^{ \lambda + \alpha _1 + l_1 - 1/2 } t_1 x^{-1} )^n \\
& \qquad \cdot ( q^{-\lambda -\alpha _1 + h_1 -l_1 + 1 } ; q )_{n } \sum _{\ell =0}^{n } \frac{(q^{ -\lambda  -\alpha _1 + 1 } ;q)_{\ell }}{(q;q )_{n- \ell } (q;q)_{\ell }} q^{-\ell ( 2 n - \ell -1)/2}  (- q^{ \lambda + \alpha _1 - h_1 + l_2 - 1  }t_2 /t_1 ) ^{\ell } , \nonumber \\
& g_4(x) = x^{-\alpha _1 } \sum _{n=0}^{\infty} ( q^{1/2} x^{-1})^n ( q^{\lambda +\alpha _1 } ; q )_n \sum _{k=0}^n  \frac{ (q^{  \lambda +\alpha _1  -h_1 +l_1 } ;q)_{n-k }}{(q;q )_k (q;q)_{n-k}}  (q^{l_1 } t_1)^k (q ^{l_2 } t_2 ) ^{n-k} \nonumber 
\end{align}
are solutions of Eq.~(\ref{C2-Heun}).
\end{prop}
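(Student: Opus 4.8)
The plan is to recognize each of the four functions as the image, under one of the two directions of the gauge correspondence of Proposition~\ref{prop:CC2corresp}, of a series solution already produced in Sections~\ref{sec:SClimit}--\ref{sec:otherconfl}, so that beyond bookkeeping of parameters nothing new must be checked. Two of the four functions require no argument at all: $g_1(x)$ is verbatim the function in Eq.~(\ref{eq:sols000}), i.e.\ the solution of Eq.~(\ref{C-Heun}) given by Theorem~\ref{thm:scsol}(i), and $g_4(x)$ is verbatim the function in Eq.~(\ref{eq:sols000b}), i.e.\ the solution of Eq.~(\ref{C2-Heun}) given by Theorem~\ref{thm:oscsol}(ii); these are recorded only to display each solution next to its gauge partner.

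For $g_2(x)$ I would use Proposition~\ref{prop:CC2corresp} in the converse direction with the parameters of Eq.~(\ref{eq:propCparam}), namely $\tilde l_1=l_1$, $\tilde l_2=h_2$, $\tilde h_1=h_1$ and $\tilde\alpha_1=h_1-l_1+1-2\lambda-\alpha_1$ (with $t_1,t_2,\lambda$ kept). First I would check that the series obtained from $g_2(x)$ by deleting the prefactor $((q^{-h_2+1/2}t_2^{-1}x;q)_\infty)^{-1}$ is precisely the function of Theorem~\ref{thm:oscsol}(ii) written in the parameters $(\tilde h_1,\tilde l_1,\tilde l_2,\tilde\alpha_1,t_1,t_2,\lambda)$: this amounts to the exponent identities $-\tilde\alpha_1=2\lambda+\alpha_1-h_1+l_1-1$, $\lambda+\tilde\alpha_1=-\lambda-\alpha_1+h_1-l_1+1$, $\lambda+\tilde\alpha_1-\tilde h_1+\tilde l_1=-\lambda-\alpha_1+1$, together with $q^{\tilde l_1}t_1=q^{l_1}t_1$ and $q^{\tilde l_2}t_2=q^{h_2}t_2$. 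That series therefore solves Eq.~(\ref{C2-Heun01}), and by the converse half of Proposition~\ref{prop:CC2corresp} its quotient by $(q^{-\tilde l_2+1/2}t_2^{-1}x;q)_\infty=(q^{-h_2+1/2}t_2^{-1}x;q)_\infty$, which is exactly $g_2(x)$, solves Eq.~(\ref{C-Heun}).

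For $g_3(x)$ I would use Proposition~\ref{prop:CC2corresp} in the forward direction, this time starting from Eq.~(\ref{C-Heun}) with parameters $h_1'=h_1$, $h_2'=l_2$, $l_1'=l_1$, $\alpha_1'=h_1-l_1+1-2\lambda-\alpha_1$ and $t_1,t_2,\lambda$ unchanged, so that the tilded parameters of Eq.~(\ref{C2-Heun01}) induced by Eq.~(\ref{eq:propCparam}) become $\tilde h_1=h_1$, $\tilde l_1=l_1$, $\tilde l_2=l_2$, $\tilde\alpha_1=\alpha_1$, i.e.\ the original data of Eq.~(\ref{C2-Heun}). I would then check that the series obtained from $g_3(x)$ by deleting the prefactor $(q^{-l_2+1/2}t_2^{-1}x;q)_\infty$ coincides with the function of Theorem~\ref{thm:scsol}(i) in the primed parameters, which reduces to the identities $-\alpha_1'=2\lambda+\alpha_1-h_1+l_1-1$, $-\lambda-\alpha_1'+h_1'+1/2=\lambda+\alpha_1+l_1-1/2$, $\lambda+\alpha_1'=-\lambda-\alpha_1+h_1-l_1+1$, $\lambda+\alpha_1'-h_1'+l_1'=-\lambda-\alpha_1+1$ and $-\lambda-\alpha_1'-l_1'+h_2'=\lambda+\alpha_1-h_1+l_2-1$. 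Hence that series solves Eq.~(\ref{C-Heun}) with the primed parameters, and multiplying it by $(q^{-h_2'+1/2}t_2^{-1}x;q)_\infty=(q^{-l_2+1/2}t_2^{-1}x;q)_\infty$, which gives $g_3(x)$, produces by Proposition~\ref{prop:CC2corresp} a solution of Eq.~(\ref{C2-Heun}).

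The only real obstacle is keeping the substitution $\tilde\alpha_1=\alpha_1'=h_1-l_1+1-2\lambda-\alpha_1$ straight while matching, one at a time, every $q$-shifted factorial argument, every power of $x$ and every power of $q$ in the double sums against the formulas of Theorem~\ref{thm:scsol}(i) and Theorem~\ref{thm:oscsol}(ii); each of these is a one-line computation in the exponents, but there are enough of them that dropping a sign or a shift is easy. No fresh recurrence analysis is needed: once the two series have been identified, both directions of Proposition~\ref{prop:CC2corresp} apply verbatim and the proposition follows.
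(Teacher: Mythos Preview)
Your proposal is correct and follows exactly the route the paper intends: the proposition is stated as ``a consequence of Proposition~\ref{prop:CC2corresp}'' with no further argument, and your derivation---identifying $g_1$ and $g_4$ with the already-known solutions of Theorem~\ref{thm:scsol}(i) and Theorem~\ref{thm:oscsol}(ii), then transporting the latter through the gauge factor in each direction with the parameter dictionary~(\ref{eq:propCparam}) to obtain $g_2$ and $g_3$---is precisely the intended consequence, carried out in full detail.
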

We can show similar properties for the biconfluent equations (Eq.~(\ref{WC-Heun}) and Eq.~(\ref{WC2-Heun})).
\begin{prop}
(i) Assume that function $y(x)$ satisfies Eq.~(\ref{WC-Heun}), which is a variant of the confluent $q$-hypergeometric equation of type $(0,2)$.
Then the function $u(x)= (q^{-h_1 +1/2} t_1^{-1} x;q)_{\infty } (q^{-h_2 +1/2} t_2^{-1} x;q)_{\infty } y(x ) $ is a solution of the equation
\begin{align}
& q^{ 2\lambda +1} g(x/q) + (1-q^{-l_1+1/2}t_1^{-1}x)(1-q^{-l_2+1/2}t_2^{-1} x) g(qx) \label{WC2-Heun01} \\
&-[ q^{ - \tilde{\alpha }_1 -l_1 - l_2 +1 }t_1^{-1} t_2^{-1} x^2 - q^{ \lambda +1/2} ( q^{-l_2} t_2^{-1} + q^{-l_1} t_1^{-1} ) x + q^{ \lambda } (q + 1) ] g(x) =0 \nonumber
\end{align}
where
\begin{align}
& l_1= h_1,\; l_2 =h_2, \; \tilde{\alpha }_1 + \alpha _1 = 1 - 2\lambda .
\label{eq:propCparam2}
\end{align}
Note that Eq.~(\ref{WC2-Heun01}) is a variant of the confluent $q$-hypergeometric equation of type $(2,0)$ (see Eq.~(\ref{WC2-Heun})).
Conversely, if the function $u(x) $ satisfies Eq.~(\ref{WC2-Heun01}), then the function $y(x) = u(x) / ( ( q^{-l_1 +1/2} t_1^{-1} x ; q )_{\infty } (q^{-l_2 +1/2} t_2^{-1} x ; q)_{\infty } ) $ satisfies Eq.~(\ref{WC-Heun}) where the relationship among the parameters is given by Eq.~(\ref{eq:propCparam2}).\\
(ii) The functions
\begin{align}
& g_1 (x) = x^{-\alpha _1 } \sum _{n =0}^{\infty} ( q^{ - \lambda - \alpha _1 + 1/2} x^{-1} )^{n } ( q^{\lambda + \alpha _1 } ; q )_{n } \sum _{\ell =0}^{n }  \frac{q^{- \ell (n -\ell )} ( q^{ h_1 } t_1 )^{n -\ell } (q^{ h_2 } t_2 ) ^{\ell } }{(q;q )_{n -\ell  } (q;q)_{\ell }} , \\
 & g_2 (x) = ((q^{-h_1 +1/2} t_1^{-1} x;q)_{\infty } (q^{-h_2 +1/2} t_2^{-1} x;q)_{\infty } )^{-1} x^{2\lambda + \alpha _1 -1 } \nonumber \\
& \qquad \qquad \cdot  \sum _{n=0}^{\infty} (q^{1/2} x^{-1})^n ( q^{- \lambda - \alpha _1 -1} ; q )_n \sum _{k=0}^n  \frac{ (q^{h_1 } t_1)^k (q ^{h_2 }t_2 ) ^{n-k} }{(q;q )_k (q;q)_{n-k}} \nonumber
\end{align}
are solutions of Eq.~(\ref{WC-Heun}).\\
(iii)
The functions
\begin{align}
& g_3(x) = (q^{-l_1 +1/2} t_1^{-1} x;q)_{\infty } (q^{-l_2 +1/2} t_2^{-1} x;q)_{\infty } x^{2\lambda + \alpha _1 -1 } \\
& \qquad \qquad \cdot \sum _{n =0}^{\infty} ( q^{ \lambda + \alpha _1 - 1/2} x^{-1} )^{n } ( q^{\lambda + \alpha _1 } ; q )_{n } \sum _{\ell =0}^{n }  \frac{q^{- \ell (n -\ell )} ( q^{ l_1 } t_1 )^{n -\ell } (q^{ l_2 } t_2 ) ^{\ell } }{(q;q )_{n -\ell  } (q;q)_{\ell }} , \nonumber \\
 & g_4(x) = x^{-\alpha _1 } \sum _{n=0}^{\infty} (q^{1/2} x^{-1})^n ( q^{\lambda +\alpha _1 } ; q )_n \sum _{k=0}^n  \frac{ (q^{l_1 } t_1)^k (q ^{l_2 }t_2 ) ^{n-k} }{(q;q )_k (q;q)_{n-k}} \nonumber 
\end{align}
are solutions of Eq.~(\ref{WC2-Heun}).
\end{prop}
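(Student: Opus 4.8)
The plan is to reduce everything to Proposition \ref{prop:GT}, applied twice, together with the explicit series already established in Theorem \ref{thm:bcsol} and Theorem \ref{thm:obcsol2}; the whole argument is the biconfluent counterpart of the proof of Proposition \ref{prop:CC2corresp}. For part (i), note that in Eq.~(\ref{WC-Heun}) the coefficient of $g(x/q)$ is $q^{2\lambda+1}(1-q^{-h_1-1/2}t_1^{-1}x)(1-q^{-h_2-1/2}t_2^{-1}x)$ while the coefficient of $g(qx)$ is $1$. Applying Proposition \ref{prop:GT} (i) with $\alpha=q^{-h_1-1/2}t_1^{-1}$ (so that $(1-\alpha x)$ is the first linear factor and $a(x)=q^{2\lambda+1}(1-q^{-h_2-1/2}t_2^{-1}x)$) moves that factor onto the $g(qx)$-coefficient and multiplies the solution by $(q^{-h_1+1/2}t_1^{-1}x;q)_\infty$; applying it once more with $\alpha=q^{-h_2-1/2}t_2^{-1}$ and $a(x)=q^{2\lambda+1}$ does the same for the second factor and multiplies by $(q^{-h_2+1/2}t_2^{-1}x;q)_\infty$. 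Since these two $q$-shifted factorials commute, the composite gauge factor is exactly the product appearing in the statement, the transformed equation has the shape of Eq.~(\ref{WC2-Heun01}), comparing the $g(qx)$-coefficients gives $l_1=h_1$ and $l_2=h_2$, and comparing the $x^2$-term of the $g(x)$-coefficient gives $\tilde\alpha_1+\alpha_1=1-2\lambda$, which is Eq.~(\ref{eq:propCparam2}). The converse is obtained symmetrically, by applying Proposition \ref{prop:GT} (ii) twice to Eq.~(\ref{WC2-Heun01}) to strip the two linear factors off the $g(qx)$-coefficient and dividing by the same product of $q$-factorials.

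For part (ii), the function $g_1$ is literally the solution of Eq.~(\ref{WC-Heun}) in Theorem \ref{thm:bcsol} (i), so nothing is to be proved for it; for $g_2$ I would take the solution of Eq.~(\ref{WC2-Heun}) from Theorem \ref{thm:obcsol2} (ii), substitute the correspondence parameters of Eq.~(\ref{eq:propCparam2}) (its $\alpha_1$ becomes $1-2\lambda-\alpha_1$ and its $l_1,l_2$ become $h_1,h_2$, which also turns the prefactor $x^{-\alpha_1}$ into $x^{2\lambda+\alpha_1-1}$), and then invoke the converse half of part (i): dividing that series by $(q^{-h_1+1/2}t_1^{-1}x;q)_\infty(q^{-h_2+1/2}t_2^{-1}x;q)_\infty$ produces a solution of Eq.~(\ref{WC-Heun}), which after simplification is $g_2$. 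Part (iii) is the mirror construction: $g_4$ is verbatim the solution of Eq.~(\ref{WC2-Heun}) of Theorem \ref{thm:obcsol2} (ii), while $g_3$ is obtained by taking the solution $g_1$ of Eq.~(\ref{WC-Heun}) from Theorem \ref{thm:bcsol} (i), rewriting its parameters through Eq.~(\ref{eq:propCparam2}), and multiplying by the gauge factor $(q^{-l_1+1/2}t_1^{-1}x;q)_\infty(q^{-l_2+1/2}t_2^{-1}x;q)_\infty$ as in the forward half of part (i).

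The only genuinely delicate point is the bookkeeping: one must keep track of which copy of $\alpha_1$ (and which of the pairs $h_1,h_2$ versus $l_1,l_2$) belongs to which equation, carry the shift $\alpha_1\mapsto 1-2\lambda-\alpha_1$ through both the $x$-power in the prefactor and the bases $q^{\lambda+\alpha_1}$ inside the $q$-Pochhammer symbols, and then check termwise that the transported series collapses to the stated closed form for $g_2$ (respectively $g_3$). This is a routine but error-prone manipulation of $q$-shifted factorials; the structural input, namely two applications of Proposition \ref{prop:GT}, is immediate once Eq.~(\ref{WC-Heun}) and Eq.~(\ref{WC2-Heun01}) are written in the normalized form used in the proof of Proposition \ref{prop:CC2corresp}.
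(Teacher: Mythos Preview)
Your proposal is correct and matches the approach the paper intends: the paper does not write out a proof for this proposition but prefaces it with ``We can show similar properties for the biconfluent equations,'' pointing back to the proof of Proposition~\ref{prop:CC2corresp}, which is exactly a single application of Proposition~\ref{prop:GT} together with a comparison of coefficients. Your plan of applying Proposition~\ref{prop:GT}~(i) twice (once for each linear factor $(1-q^{-h_i-1/2}t_i^{-1}x)$), reading off $l_i=h_i$ and $\tilde\alpha_1+\alpha_1=1-2\lambda$, and then transporting the known series from Theorem~\ref{thm:bcsol}~(i) and Theorem~\ref{thm:obcsol2}~(ii) through the gauge factor to produce $g_2$ and $g_3$, is precisely the biconfluent analogue the paper has in mind.
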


\section{Limit to the differential equation} \label{sec:limdifftial}

We take the continuum limit $q \to 1 $ from the difference equations given in this paper.
Recall that the variant of the confluent $q$-hypergeometric equation of type $(1,2)$ was given by
\begin{align}
\label{C-Heun02}
&q^{ h_1+h_2-l_1-2\lambda +1/2}t_2(q^{l_1 - 1/2}t_1-x)g(qx)\\
& +(x-q^{h_1 + 1/2} t_1)(x-q^{h_2 + 1/2}t_2)g(x/q) \nonumber \\
&-[q^{\alpha _1}x^2 -q^{h_1+h_2-\lambda +1/2}(q^{-h_2}t_1+q^{-h_1}t_2+q^{-l_1}t_2)x \nonumber \\
& \quad +q^{h_1+h_2-\lambda }(q+1) t_1t_2]g(x)=0. \nonumber 
\end{align}
Set $q=1+ \varepsilon $, $t_2= 1/(T \varepsilon ) $ and consider the limit $\varepsilon \to 0$, which is equivalent to $q\to 1 $.
By using Taylor's expansion
\begin{align}
& g(x/q) =g(x) + (-\varepsilon +\varepsilon ^2 )xg'(x) + \varepsilon ^2 x^2 g''(x) /2 +O( \varepsilon ^3),\\
& g(qx) =g(x) + \varepsilon  x g'(x) + \varepsilon ^2 x^2 g''(x) /2 +O( \varepsilon ^3), \nonumber 
\end{align}
we find the following limit as $\varepsilon  \to 0$:
\begin{align}
& x^2 (x -t_1 ) g''(x) + x \{ T x (x-t_1 ) + ( h_1 -l_1 + 1 )x -2 \lambda ( x - t_1 ) \} g'(x) \label{eq:Cdiffeq} \\
& + \{ \alpha _1 T x^2 +  \lambda ( t_1 T -h_1 + l_1 +\lambda ) x  - \lambda (\lambda +1 ) t_1 \}  g(x)=0 . \nonumber 
\end{align}
This equation has an irregular singularity at $x= \infty $ and regular singularities at $x=0, t_1$, and we can show that the singularity $x=0$ is apparent.
Set $f(x)= x^{ \lambda } g(x)$.
Then we essentially obtain Kummer's confluent hypergeometric equation.
\begin{align}
& (x -t_1 ) f''(x) + \{ (x - t_1 ) T  + h_1 -l_1 + 1 \} f'(x) + (\lambda + \alpha _1 ) T f(x)=0 . \label{eq:Cdiffeqgt} 
\end{align}
By the limit to the differential equation, some solutions of Eq.~(\ref{C-Heun02}) may converge to the solutions of Eq.~(\ref{eq:Cdiffeq}).
For example, the function 
\begin{equation}
x^{\lambda }\  _3\phi_2 (q^{\lambda + \alpha_1},0, x/(q^{l_1-1/2}t_1) ;q^{h_1-l_1+1},q^{h_2-l_1+1}t_2/t_1;q , q ) \end{equation}
in Eq.~(\ref{eq:solssimpconf01}), which is a solution of Eq.~(\ref{C-Heun02}), converges to the function 
\begin{align}
& x^{\lambda } { } _1 F_1 (\lambda +\alpha _1, h_1-l_1+1; T(t_1 -x ) ) \Big( = x^{\lambda } \sum^{\infty}_{n=0} \frac{(\lambda +\alpha_ 1 )_n }{(h_1-l_1+1 )_n n! } T^n (t_1-x) ^n \Big)
\end{align}
for each component of the series as $\varepsilon \to 0$ where $q=1+ \varepsilon $ and $t_2= 1/(T \varepsilon ) $.

We can also obtain Eq.~(\ref{eq:Cdiffeq}) from the variant of the confluent $q$-hypergeometric equation of type $(2,1)$.
Namely we can obtain Eq.~(\ref{eq:Cdiffeq}) from Eq.~(\ref{C2-Heun}) as $\varepsilon \to 0$ by setting $q=1+ \varepsilon $ and $t_2= -1/(T \varepsilon ) $.

We consider the limit from variants of the biconfluent $q$-hypergeometric equation.
Recall that the variant of the confluent $q$-hypergeometric equation of type $(0,2)$ was given by
\begin{align}
\label{WC-Heun02}
& g(qx)+   q^{ 2\lambda +1} ( t_1 ^{-1} q^{-h_1 -1/2} x-1 )(t_2 ^{-1} q^{-h_2 -1/2 } x- 1)g(x/q)\\
&-[t_1 ^{-1} t_2 ^{-1} q^{\alpha _1 + 2\lambda -h_1- h_2 } x^2 - q^{ \lambda +1/2}(q^{-h_2}t_2 ^{-1}+q^{-h_1}t_1 ^{-1})x
+q^{ \lambda } (q + 1) ]g(x)=0 . \nonumber 
\end{align}
Set $q=1+ \varepsilon $, $t_1 ^{-1} = B \varepsilon  ^{1/2} $, $t_2 ^{-1} = -B \varepsilon  ^{1/2} $, and consider the limit $\varepsilon \to 0$.
Then we find the following limit as $\varepsilon  \to 0$:
\begin{align}
& x^2 g''(x) + x ( B ^2 x^2 -2 \lambda ) g'(x)  + \{ \alpha _1 B^2 x^2  + \lambda ( \lambda +1)  \}  g(x)=0 . \label{eq:WCdiffeq}
\end{align}
Set $f(x)= x^{ \lambda } g(x)$.
Then we have
\begin{align}
& f''(x) +  B ^2 x f'(x) + ( \alpha _1 + \lambda )B^2 g(x)=0 ,
\label{eq:WCdiffeqgt}
\end{align}
which is essentially the Hermite-Weber differential equation.
It seems that the solution of Eq.~(\ref{WC-Heun02}) given in Eq.~(\ref{eq:solbiconf00}) tends to the formal series
\begin{align}
& x^{-\alpha _1} \sum _{n=0}^{\infty } \frac{(\lambda +\alpha _1)_{2n}}{n! B^{2n} } x^{-2n} 
\end{align}
for each component as $\varepsilon \to 0$.
This series is a formal solution of Eq.~(\ref{eq:WCdiffeq}).

We can obtain Eq.~(\ref{eq:WCdiffeq}) from the variant of the confluent $q$-hypergeometric equation of type $(2,0)$.
Namely we obtain Eq.~(\ref{eq:WCdiffeq}) from Eq.~(\ref{WC2-Heun}) as $\varepsilon \to 0$ by setting $q=1+ \varepsilon $, $t_1 ^{-1} = B (-\varepsilon  )^{1/2} $ and $t_2 ^{-1} = -B (- \varepsilon  )^{1/2} $.

\section{Concluding remarks} \label{sec:concl}

In this papar, we investigated degenerations of the variant of $q$-hypergeometric equation of degree two and obtained several formal solutions of the variant of singly confluent and biconfluent $q$-hypergeometric equations.
Convergence or divergence of the formal solutions and the resummation should be clarified in a near future.
We propose a problem for obtaining further degeneration of the variant of biconfluent $q$-hypergeometric equation.
We may define a variant of triconfluent $q$-hypergeometric equation by considering the degeneration such that the coefficient of $g(x/q)$ in Eq.~(\ref{WC-Heun}) is a linear polynomial, although we do not know how to obtain the limit to the differential equation (e.g.~Airy's differential equation).

In \cite{Oym}, Ohyama proposed a coalescent diagram of $q$-special functions starting from Heine's $q$-hypergeometric function, which is related with special solutions of $q$-difference Painlev\'e equations.
Although the degenerations of the variant of $q$-hypergeometric equation in this paper are different objects from the ones in Ohyama's paper, it would be interesting to find a unified theory.
For example, the standard singly confluent $q$-hypergeometric equation $(c - a x)u(q x) - (c + q - x)u( x) + qu(x/q) = 0 $ can be essentially obtained from the variant of the confluent $q$-hypergeometric equation of type $(2,1)$ (i.e.~Eq.~(\ref{C2-Heun})) by the limit $t_1 \to 0$.
However it would not be simple for the biconfluent case.
The monograph by Koekoek-Lesky-Swarttouw \cite{KLS} might be useful for further studies.

\section*{Acknowledgements}
The authors are grateful to the referee for valuable comments.
The third author would like to thank Professor Yousuke Ohyama for discussions.
He was supported by JSPS KAKENHI Grant Number JP18K03378.

\end{document}